\newcolumntype{L}{>{\displaystyle}l}
\newcolumntype{C}{>{\displaystyle}c}
\newcolumntype{R}{>{\displaystyle}r}
 \newtheorem{theorem}{Theorem}[section]
 \newtheorem{proposition}[theorem]{Proposition}
 \newtheorem{fact}[theorem]{Fact}
 \newtheorem{lemma}[theorem]{Lemma}
 \newtheorem{corollary}[theorem]{Corollary}
\theoremstyle{definition}
 \newtheorem{definition}[theorem]{Definition}
 \newtheorem{remark}[theorem]{Remark}
 \newtheorem*{acknowledgements}{Acknowledgements}
 \newtheorem{example}[theorem]{Example}
\numberwithin{equation}{section}
\newcommand{\R}{\boldsymbol{R}}
\newcommand{\Z}{\boldsymbol{Z}}
\newcommand{\rank}{\operatorname{rank}}
\renewcommand{\phi}{\varphi}
\newcommand{\A}{\mathcal{A}}
\newcommand{\RR}{\mathcal{R}}
\newcommand{\ep}{\varepsilon}
\newcommand{\mycomment}[1]{}
\renewcommand{\tilde}{\widetilde}
\title{{\normalsize {\bf Geometry on deformations of $S_1$ singularities}}}
\author{{\normalsize Runa Shimada}}
\begin{document}
\maketitle
\footnote[0]{ 2020 Mathematics Subject classification. Primary
57R45; Secondary 53A05.}
\footnote[0]{Keywords and Phrases. $S_1$ singularity, deformations, normal form, Whitney umbrella}
\begin{abstract}
To study a one parameter deformation of an $S_1$ singularity
taking into
consideration its differential geometric properties,
we give a form representing the deformation
using only diffeomorphisms on the source and isometries of the target.
Using this form, we study differential geometric properties of $S_1$ singularities and the Whitney umbrellas appearing in the deformation. 
\end{abstract}

\section{Introduction}

In recent decades, differential geometric properties of curves and surfaces with singularities have been studied by many authors. 
Usually, singularities are deformed and turn into various other singularities.
 The $S_1^\pm$ singularities are known as codimension one singularities of surfaces, and the codimension one singularities correspond to the appearance/disappearance of generic singularities. Therefore, it is natural to include a deformation when we study such singularities. In other words, the appearance/disappearance of the Whitney umbrella can be seen in deformations of the $S_1$ singularities.
An $SO(3)$-{\it normal form} (a {\it normal form} for short)
is a formula for a singular point  which reduces coefficients
as much as possible by using a diffeomorphism-germ on
the source space and an isometry-germ on the target space.
This kind of form is given in \cite{bw,west} for the
Whitney umbrella, and is called the Bruce-West normal form.
There are many studies of the differential geometry of the Whitney umbrella by using the Bruce-West normal form. 
 In {\rm \cite{hhnuy}}, it is proved that some of the coefficients of the form are intrinsic invariants.
Moreover, the focal conic, which measures the contact to the sphere, has been introduced using coefficients appearing in this normal form (c.f. {\rm \cite{fh-fronts}}).
The curvature parabola has been introduced in {\rm \cite{mn}} and using it, the umbilic curvature is defined except for the Whitney umbrella, and the axial curvature is defined {\rm \cite{os}}.

In this paper, we extend the normal form 
to deal with singularities which are deforming.
More precisely, we give a normal form of the 
$S_1^\pm$ singularities
including a deformation parameter (Theorem \ref{thm:normal}).
In the generic deformation of the 
$S_1^\pm$ singularity, two Whitney umbrellas appear, which merge into a single
singularity and then disappears, or vise versa.

We give 
the formula for the asymptotic expansions
of geometric invariants of Whitney umbrellas
appearing in the deformations of 
the $S_1^\pm$ singularity (Theorem \ref{thm:invexp}).
Using the formula, we study the asymptotic behavior
of the focal conic,
curvature parabola and geometric invariants of the
Whitney umbrellas appearing in the deformation
of an $S_1^\pm$ singularity in Section \ref{sec:geom}.
As a by-product, we obtain a reasonable
definition of the umbilic curvature for a Whitney umbrella
which is not defined for this singularity in {\rm \cite{mn}}.

The {\it Whitney umbrella} is a map-germ  $\A$-equivalent to the map-germ defined by
$
(u,v)\mapsto(u,uv,v^2)
$
at the origin.
Here two map-germs $f_1:(\R^2,0)\to(\R^3,0)$ and $f_2:(\R^2,0)\to(\R^3,0)$ are {\it $\A$-equivalent} if there exists a coordinate change $\phi$ of the source space and a coordinate change $\psi$ of the target space such that $f_2=\psi\circ f_1\circ \phi^{-1}$.

The following fact is known.
\begin{fact}{\rm \cite{west}}\label{fact:wumb}
Let\/ $f:(\R^2,0) \to (\R^3,0)$ be a Whitney umbrella. Then there exist a diffeomorphism-germ\/ $\phi:(\R^2,0) \to (\R^3,0)$ and\/ $T \in SO(3)$ such that\/
$$
T \circ f \circ \phi (u,v) =\biggl (u, uv+O(3),\frac{1}{2}(a_{20}u^2+2a_{11}uv+a_{02}v^2)+O(3)\biggr)
$$
with\/ $a_{20}, a_{11}, a_{02}  \in \R$. Here,\/  $O(n)$ stands for the terms whose degrees are equal to or greater than\/ $n$.
\end{fact}

See \cite{bk,diastari,gsg,hhnsuy,hhnuy,hnuy,sym,tari} for other studies using the normal form. The coefficients
$a_{20}, a_{11}, a_{02}$ 
contain
all the local second order geometrical information
about the Whitney umbrella.
An $S_1^\pm$ singularity is map-germ  $\A$-equivalent to the map-germ defined by 
$
(u, v) \mapsto (u, v^2, v(u^2\pm v^2))
$
at the origin. See  the center graphics of Figures 1 and 2.
It is known that the codimension one singularities of surfaces in 3-space are $S_1^\pm$ singularities by Mond's classification (c.f. {\rm \cite{mond}},
see also \cite{chenmatu}).
\section{Geometric deformations of  $S_1^\pm$ singularities and their normal forms}

The main objective of this paper is to study geometric deformations of singularities.

\begin{definition}
A map-germ $f:(\R^2 \times \R,0) \to (\R^3,0)$ is a {\it deformation of $g:(\R^2,0) \to (\R^3,0)$} if it is smooth and $f(u, v, 0)=g(u,v)$ and $f(0,0,s)=(0,0,0)$.
\end{definition}

In this definition, the parameter $s$ as the third component of the source space is called the {\it deformation parameter}. We define an equivalence relation between two deformations preserving the deformation parameters as follows.

\begin{definition}\label{def:deformeq}
Let $f_1, f_2 : (\R^2 \times \R, 0) \to (\R^3, 0)$ be deformations of $g$. Then $f_1$ and
$f_2$ are {\it equivalent as deformations} if there exist orientation preserving diffeomorphism-germs $\phi:(\R^2 \times \R,0) \to (\R^2 \times \R,0)$
with the form
\begin{eqnarray}
\phi(u,v,s)=(\phi_1(u,v,s),\phi_2(u,v,s),\phi_3(s))\quad
\left(
\dfrac{d\phi_3}{ds}(0)>0\right)
\label{eq:phi}
\end{eqnarray}
and 
$\psi:(\R^3 ,0) \to (\R^3,0)$
such that
$\psi \circ f_1 \circ \phi^{-1}(u,v,s)=f_2(u,v,s)$ holds.
\end{definition}
This implies that 
equivalence as deformations means an $\A$-equivalence that preserves
the deformation parameters. This notion is also called $P$-$\RR$-equivalence  (c.f. {\rm \cite{irrt}}). The following maps are typical deformations of the $S_1^\pm$ singularities.

Let $f^{s,\pm}$ be germs defined by
$$f^{s,\pm}:(\R^2 \times \R,0) \ni (u,v,s) \mapsto (u,v^2,v(u^2\pm v^2)+sv) \in (\R^3,0).$$
Then $f^{s,+} (f^{s,-})$ is a deformation of an $S_1^+ (S_1^-) $ singularity.
We can observe that the Whitney umbrellas appear in the deformations.
See Figure \ref{fig:IMG_91} and Figure \ref{fig:IMG_85}.

\begin{figure}[h!]
\centering
\includegraphics[width=0.5\linewidth, bb=0 0 892 374]{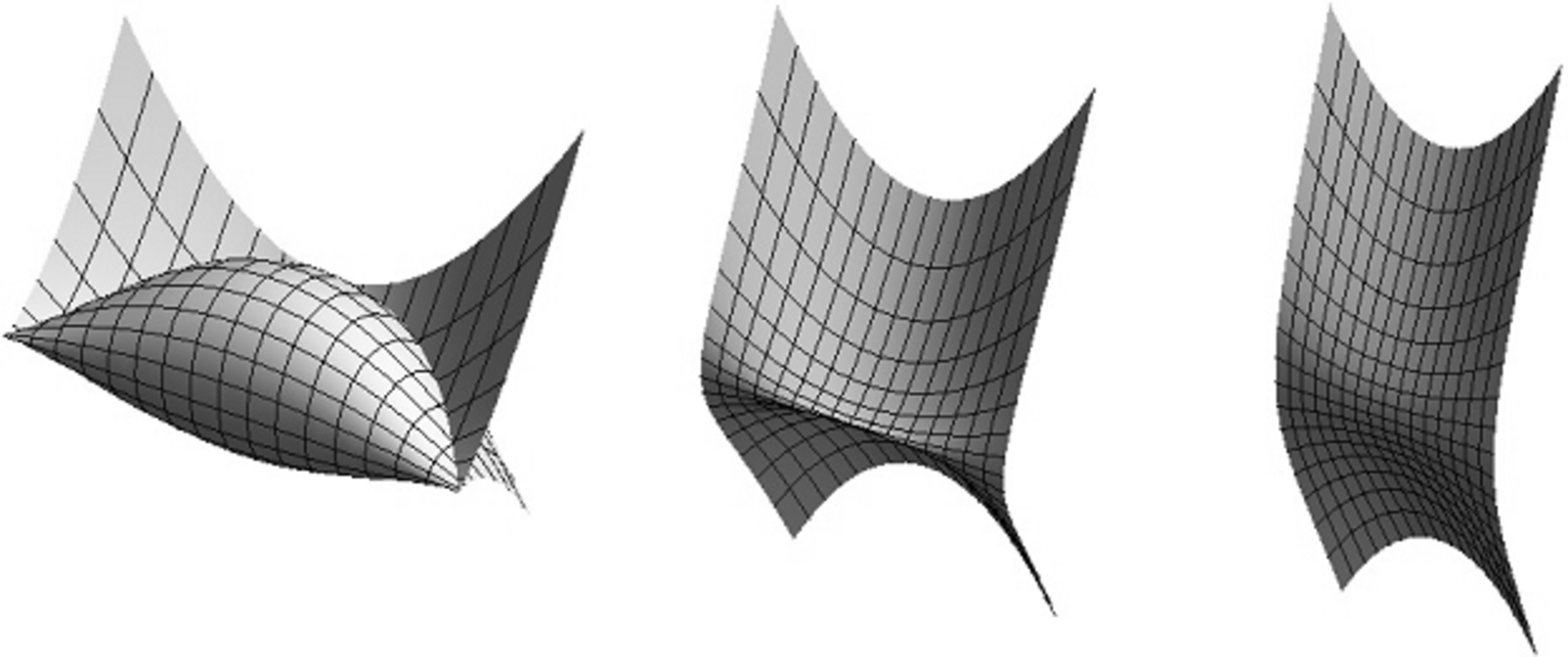}

\caption{Deformation of an $S_1^+$ singularity $($from left to right $f^{-1,+}, f^{0,+}$ and $f^{1,+})$}
\label{fig:IMG_91}
\end{figure}

\begin{figure}[htbp]
\centering
\includegraphics[width=0.5\linewidth, bb=0 0 985 368]{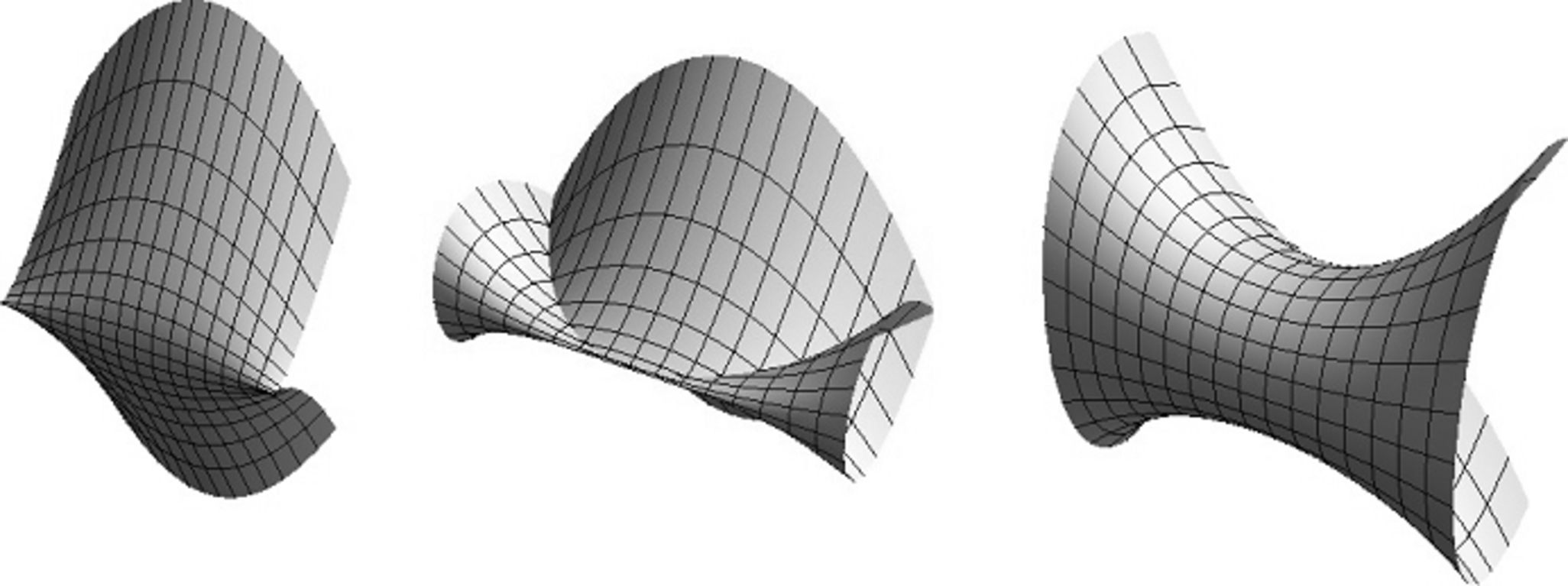}
\caption{Deformation of an $S_1^-$ singularity $($from left to right $f^{-1,-},  f^{0,-}$ and $f^{1,-}$)}
\label{fig:IMG_85}
\end{figure}

Let $ f : (\R^2, 0) \to (\R^n, 0)$ $(n=2,3)$ be a map-germ.
A vector $\eta\in T_p\R^2$ is called a {\it null vector} if
it is a generator of $\operatorname{Ker} df_p$.
A vector field $\eta$ on the source space  is an
{\it extended null vector field} if when $p\in S(f)$ then $\eta_p$ is a null vector.
An extended null vector field is also called 
a {\it null vector field} for short.

\begin{theorem}\label{thm:normal}
Let\/ $ f : (\R^2 \times \R, 0) \to (\R^3, 0)$ be a deformation of\/ 
$g:(\R^2, 0) \to (\R^3, 0)$ such that
the\/ $2$-jet of\/ $g$ is\/ $\A$-equivalent to\/ $(u,v^2,0)$.
Then there exist an orientation preserving diffeomorphism-germ\/ $\phi : (\R^2 \times \R, 0) \to (\R^2 \times \R, 0)$ with the form \eqref{eq:phi},\/ $T \in SO(3)$ and functions\/ $f_{21}, f_{31} \in C^\infty(1,1), f_{24}, f_{33}, f_{34} \in C^\infty(2,1), f_{32} \in C^\infty(3,1)$ such that
\begin{eqnarray}
{f_{{\rm normal}}}^s&=&
T \circ f \circ \varphi(u,v,s)\nonumber\\ 
&=&(u,u^2f_{21}(u)+v^2+usf_{24}(u,s),\label{eq:normal}\\
& &\hspace{5mm}
u^2f_{31}(u)+v^2f_{32}(u,v,s)+vf_{33}(u,s)+usf_{34}(u,s)),\nonumber
\end{eqnarray}\nonumber
where\/
$f_{32}(0,0,0)=f_{33}(0,0)=(f_{33})_u(0,0)=0$.
Furthermore,\/ $f(u,v,0)=g(u,v)$ is an\/ $S_1^+$ singularity
\/$($respectively, \/$S_1^-$ singularity\/$)$
if and only if\/
$$(f_{32})_v(0,0,0)(f_{33})_{uu}(0,0)> 0$$\/$($respectively, $<0)$.
If $(df_{33}/ds)(0,0) \ne 0$, then one can further reduce $f_{33}(0,s)=s$.
\end{theorem}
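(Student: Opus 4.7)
The plan is to reduce $f$ to the normal form through successive source diffeomorphisms of the admissible form \eqref{eq:phi} and target isometries in $SO(3)$. Throughout, the $s$-axis is preserved, since the deformation condition $(T\circ f\circ\varphi)(0,0,s)=0$ together with the fact that the $s$-axis is (locally) the only branch of $f_1^{-1}(0)$ force $\varphi(0,0,s)=(0,0,\varphi_3(s))$.

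First I would align the $1$-jet and set $f_1=u$. Because $f(0,0,s)\equiv 0$ we have $\partial_s f(0,0,0)=0$, and combined with the 2-jet hypothesis on $g$ this makes $df(0,0,0)$ of rank one; a preliminary $T_0\in SO(3)$ rotates its image onto the first axis, after which the change $\tilde u=f_1(u,v,s),\ \tilde v=v,\ \tilde s=s$ is an admissible source diffeomorphism (by the implicit function theorem) that preserves the $s$-axis since $f_1(0,0,s)=0$, and achieves $f_1=u$ in the new coordinates. From here onward $\varphi_1=u$ is fixed and the remaining target freedom is the $SO(2)$ of rotations in the $(y,z)$-plane.

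Next I would carry out a parametric Morse reduction of $f_2$: writing $f_2=C(u,s)+vB(u,s)+v^2 A(u,v,s)$ with $A(0,0,0)=1$, the change $\tilde v=\sqrt{A}\bigl(v+B/(2A)\bigr)$ completes the square to give $f_2=\tilde v^2+\widetilde C(u,s)$. This is only an admissible source diffeomorphism when $B(0,s)=\partial_v f_2(0,0,s)$ vanishes identically in $s$; arranging this by a synchronized choice of target $SO(2)$ rotation together with a preliminary source correction is the delicate step of the argument. Once $\widetilde C$ is produced, two applications of Hadamard's lemma---exploiting $\widetilde C(0,s)=0$ (from the deformation property) and $\widetilde C(u,0)=O(u^2)$ (from the 2-jet)---give smooth $f_{21},f_{24}$ with $\widetilde C=u^2 f_{21}(u)+us\,f_{24}(u,s)$. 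Analogously, Taylor-expanding $f_3(u,v,s)=f_3(u,0,s)+v f_{33}(u,s)+v^2 f_{32}(u,v,s)$ with $f_{33}(u,s):=\partial_v f_3(u,0,s)$ and applying Hadamard to the $v$-independent piece yields $f_3(u,0,s)=u^2 f_{31}(u)+us\,f_{34}(u,s)$; the normalizations $f_{32}(0,0,0)=f_{33}(0,0)=(f_{33})_u(0,0)=0$ follow directly from the 2-jet hypothesis on $g$.

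The $S_1^\pm$ sign criterion is then read off the 3-jet of $f^0_{\mathrm{normal}}$ at the origin, whose third component contains $\tfrac12(f_{33})_{uu}(0,0)u^2 v$ and $(f_{32})_v(0,0,0)v^3$; comparison with the models $(u,v^2,v(u^2\pm v^2))$ gives the claimed sign of the product. Finally, when $(f_{33})_s(0,0)\neq 0$ the map $s\mapsto f_{33}(0,s)$ is a local diffeomorphism and one reparametrizes by $\varphi_3:=f_{33}(0,\cdot)^{-1}$ to obtain $f_{33}(0,s)=s$. The hard part will be the synchronization just described---forcing $\partial_v f_2(0,0,s)\equiv 0$ so that the Morse change is compatible with the deformation structure---while all other pieces are routine applications of Hadamard's lemma, the Morse lemma with parameters, and bookkeeping of higher-order remainders.
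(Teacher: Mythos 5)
Your overall route coincides with the paper's: reduce the first component to $u$ by a target rotation and a source change, remove the $v$-linear term of $f_2$ by a $v$-translation depending on $(u,s)$, rescale so that the $v^2$-coefficient is $1$, use Hadamard's lemma together with $f(0,0,s)=(0,0,0)$ and the $2$-jet hypothesis to produce the factors $u^2$ and $us$, appeal to a recognition criterion for $S_1^\pm$, and finally reparametrize $s$. The problem is that the one step you yourself single out as ``the delicate step'' and ``the hard part'' --- arranging $\partial_v f_2(0,0,s)\equiv 0$ so that completing the square is compatible with the deformation structure --- is never actually carried out; you only announce that it should be achievable ``by a synchronized choice of target $SO(2)$ rotation together with a preliminary source correction.'' This is exactly the point on which the whole reduction hinges: if $B(0,s)=\partial_v f_2(0,0,s)\not\equiv 0$, then after completing the square the new $v$-constant term satisfies $\widetilde C(0,s)=-B(0,s)^2/(4A)\neq 0$, the image of the $s$-axis is no longer the origin, and the subsequent factorization $\widetilde C=u^2f_{21}(u)+us\,f_{24}(u,s)$ is simply false. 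A proof cannot leave its own critical step as a declared difficulty, so as written this is a genuine gap. (Also, the issue is not admissibility of the Morse change --- the form \eqref{eq:phi} allows $\phi_2$ to be an arbitrary function of $(u,v,s)$ --- but the failure of the resulting map to vanish on the $s$-axis.)

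For comparison, the paper performs this step by setting $\lambda=(f_2)_v$, solving $\lambda(u,\sigma(u,s),s)=0$ by the implicit function theorem (possible because a rotation about the first axis first arranges $(f_2)_{vv}(0)\neq0$, a normalization you also gloss over when you assert $A(0,0,0)=1$), translating $v\mapsto v-\sigma(u,s)$, rescaling $v\mapsto v\sqrt{f_{22}}$, and only then invoking $f(0,0,s)=(0,0,0)$. This is the same completion of the square you propose, and the paper is likewise brief about why the translation preserves the $s$-axis (i.e.\ why $\sigma(0,s)=0$), so your instinct about where the difficulty sits is sound --- but locating a difficulty is not resolving it, and your sketch supplies no mechanism for the ``synchronization.'' One further point: for the last assertion of the theorem, reading signs off the $3$-jet and ``comparing with the models'' does not by itself establish $\A$-equivalence to $(u,v^2,v(u^2\pm v^2))$; the paper cites Saji's recognition criterion for cuspidal $S_k$ singularities, and you would need that criterion or an explicit determinacy argument to close this step as well.
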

A uniqueness of the form \eqref{eq:normal} will be discussed in
Proposition \ref{prop:unique} later.
\begin{proof}
Let $ f : (\R^2 \times \R, 0) \to (\R^3, 0)$ satisfy the assumption.
Since $\rank df=1$, we can assume 
$$
f(u,v, s)=(u, f_2(u,v,s), f_3(u,v,s))
$$
by an orientation preserving diffeomorphism-germ 
satisfying \eqref{eq:phi}
on the source space.
The functions $f_2(u,v,s)$ and $f_3(u,v,s)$ can be written as
$$
f_2(u,v,s)=f_{21}(u)+v^2f_{22}(u,v,s)+vf_{23}(u,s)+sf_{24}(u,s),
$$
$$
f_3(u,v,s)=f_{31}(u)+v^2f_{32}(u,v,s)+vf_{33}(u,s)+sf_{34}(u,s).
$$
By a rotation of $\R^3$, we can assume $(f_{21})_u(0)=(f_{31})_u(0)=0$.
Since $f(u,v,0)=g(u,v)$ satisfies that
$j^2g(0,0)$ is $\A$-equivalent to $(u,v^2,0)$, it holds that
$f_{22}(0,0,0)\ne0$ or $f_{32}(0,0,0)\ne0$. 
By a rotation with respect to the first axis of $\R^3$, we can assume 
$f_{22}(0,0,0)>0$ and $f_{32}(0,0,0)=0$.
We set
$$
F(u,v,s)=(u,f_2(u,v,s)).
$$
Then we see $\partial_v$ is a null vector field for $F$.
Let us set
$
\lambda=(f_2)_v.
$
Then $\lambda^{-1}(0)=S(F)$  holds. Since $f_{22}(0,0,0)\ne0$, 
it holds that $\lambda_v\ne0$.
By the implicit function theorem, there exists a function $\sigma(u,s)$ such that $\lambda(u,\sigma(u,s),s)=0$ holds.
Let us set
$$\Phi(u,v,s)=(u,v-\sigma(u,s),s),$$
and it is an orientation preserving 
diffeomorphism-germ satisfying \eqref{eq:phi}, and we set
$\Phi(u,v,s)=(\tilde u, \tilde v, \tilde s)$.
Since $\tilde v=0$ if and only if $v-\sigma(u,s)=0$, we know \{$\tilde v=0$\} is the set of singular points of $F$.
Since $\partial_{\tilde v}=\partial_v$, on \{$\tilde v=0$\},
we know $\partial_{\tilde v}$ is a null vector field of $F$.
By
$(f_2)_{\tilde v}(\tilde u,0,\tilde s)=0$ the function $f_2$ satisfies $(f_2)_{\tilde v}(\tilde u,\tilde v,\tilde s)=\tilde v g(\tilde u,\tilde v,\tilde s)$,
for a function $g(\tilde u,\tilde v,\tilde s)$.
Thus we can assume $f$ has the form
$$
(u, f_{21}(u)+v^2f_{22}(u,v,s)+sf_{24}(u,s), 
 f_{31}(u)+v^2f_{32}(u,v,s)+vf_{33}(u,s)+sf_{34}(u,s))
$$
with $f_{32}(0,0,0)=0$.
Since $f_{22}(0,0,0)>0$, we have
an orientation preserving
diffeomorphism-germ $\phi(u,v,s)=(u,v\sqrt{f_{22}(u,v,s)},s)$ satisfying \eqref{eq:phi}.
By considering $f\circ \phi^{-1}$ and by $f(0,0,s)=(0,0,0)$,
we can assume
\begin{eqnarray*}
(u,u^2f_{21}(u)+v^2+usf_{24}(u,s),
u^2f_{31}(u)+v^2f_{32}(u,v,s)+vf_{33}(u,s)+usf_{34}(u,s)),
\end{eqnarray*}
with $f_{32}(0,0,0)=0$,
where we have \eqref{eq:normal}.
Since $0 \in S(f(u,v,0))$, we have $f_{33}(0,0)=0$,
and since $j^2g(0,0)$ is $\A$-equivalent to $(u,v^2,0)$,
it holds that
$$(f_{33})_u(0,0)=0.$$
By \cite[Theorem 2.2]{saji}, we see that $f$ of the form
\eqref{eq:normal} with these conditions is
an $S_1^+$ singularity (respectively,
$S_1^-$ singularity) if and only if
$$(f_{32})_v(0,0,0)(f_{33})_{uu}(0,0)> 0$$
$($respectively, $<0)$.
If $(df_{33}/ds)(0,0) \ne 0$, 
then one can assume $f_{33}(0,s)=s$ by a change of the deformation parameter. 
\end{proof}
We remark that
since the tangent space $T_e\A(f^0)$ of $f^0$ satisfies
$$T_e\A f^0+(0,0,v)\R=C^\infty(2,1)^3$$
\cite[Proposition 4.1.16]{mond},
we see if
$(df_{33}/ds)(0,0) \ne 0$,
then $f^s$ is a generic deformation,
where $f^s={f_{{\rm normal}}}^s$.
In what follows, we assume $(df_{33}/ds)(0,0) \ne 0$ and
$f_{33}(0,s)=s$ in $f_{{\rm normal}}^s$.
The form ${f_{{\rm normal}}}^s$ 
with the condition $f_{33}(0,s)=s$ 
is called the {\it normal form of the deformations of an $S_1^\pm$ singularity}.
We see that the set of singular points $S({f_{{\rm normal}}}^s)$ of ${f_{{\rm normal}}}^s$ is
$$S({f_{{\rm normal}}}^s)=\{(u,v)\,|\, v=0, f_{33}(u,s)=0\}.$$
In Theorem \ref{thm:normal}, the given $f$ and ${f_{{\rm normal}}}^s$ with this condition are equivalent as deformations, and they
have the same differential geometric properties.
The uniqueness of the normal form holds as in the following sense.

\begin{definition}
A function $g(u,s)$ has {\it regularity of order $k$ in $s$} if
$$\partial g/\partial s(0,0)=\cdots =\partial^{k-1} g/\partial s^{k-1}(0,0)=0,\quad \partial^k g/\partial s^k(0,0)\ne0.$$
A function $g(u,s)$ has {\it regularity of finite order in $s$} if
there exists $k$ such that
$g$ has regularity of order $k$ in $s$.
\end{definition}

\begin{proposition}\label{prop:unique}
Let\/ $f_{{\rm normal}}^s:(\R^2 \times \R, 0) \to (\R^3, 0)$ be
a map-germ given by the form \eqref{eq:normal}
satisfying that\/ $f_{24}$ or\/ $f_{34}$ is regularity of finite order in\/ $s$.
If there exist an
orientation preserving diffeomorphism-germ\/ 
$\phi : (\R^2 \times \R, 0) \to (\R^2 \times \R, 0)$ 
with the form of Definition\/ {\rm\ref{def:deformeq}}
and\/ $T \in SO(3)$ such that\/
$$
T\circ {f_{{\rm normal}}}^s\circ\phi(u,v,s)={f_{{\rm normal}}}^s(u,v,s),
$$
then for any\/ $k\in \Z$, it holds that\/
$\hat{u}=u$,\/
$j^k\hat{v}(0,0,0)=j^kv(0,0,0)$ and\/
$j^k\hat{s}(0)=j^ks(0)$,
where\/
$\phi(u,v,s)=(\hat{u}(u,v,s),\hat{v}(u,v,s),\hat{s}(s))$.
\end{proposition}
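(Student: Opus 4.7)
The plan is to expand the identity $T\circ {f_{{\rm normal}}}^s\circ\phi = {f_{{\rm normal}}}^s$ as a formal equality at the origin and successively pin down $T$, $\hat u$, and the Taylor jets of $\hat v, \hat s$.

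\textbf{Step 1 (First component, linear analysis).} The $1$-jet of ${f_{{\rm normal}}}^s$ is $(u,0,0)$ since the second and third components vanish to order $\ge 2$. Composing with $\phi$ and then with $T$ and matching the linear parts with $(u,0,0)$ forces the first column of $T$ to be $(\pm 1,0,0)^t$; by orthogonality, $T=\operatorname{diag}(\ep,R)$ with $\ep\in\{\pm 1\}$, $R\in O(2)$, $\det R=\ep$, and in particular $T_{12}=T_{13}=0$. The first-component equation then reduces to $\ep\,\hat u=u$ identically, so $\hat u=\ep u$.

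\textbf{Step 2 (Reducing to $T=I_3$).} Matching low-order coefficients of the second and third components yields: the $v^2$-coefficient in the second gives $T_{22}\hat v_v(0)^2=1$; in the third, $T_{32}\hat v_v(0)^2=0$, hence $T_{32}=0$; and the $vs$-coefficient of the third, coming from $vf_{33}(u,s)=vs+O(3)$ (using $f_{33}(0,s)=s$), gives $T_{33}\hat v_v(0)\phi_3'(0)=1$. Combined with the orthogonality and $SO(3)$ conditions on $T$ together with $\phi_3'(0)>0$, these force $R=I_2$ and $\hat v_v(0)=\phi_3'(0)=1$. The remaining sign $\ep=+1$ is fixed by comparing the $u^2v$-coefficient of the third component, proportional to the nonzero intrinsic invariant $(f_{33})_{uu}(0,0)$ from Theorem~\ref{thm:normal}, which would change sign under $\hat u=-u$. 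Thus $T=I_3$ and $\hat u=u$ exactly.

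\textbf{Step 3 (Induction on jet order).} After Step 2 the identity reads ${f_{{\rm normal}}}^s(u,\hat v,\hat s)={f_{{\rm normal}}}^s(u,v,s)$. Setting $\delta v=\hat v-v$ and $\delta s=\hat s-s$, and assuming $\delta v,\delta s\in\mathfrak m^k$, I subtract and extract leading-order contributions. The second component produces $2v\,\delta v + u\,\partial_s\bigl(sf_{24}(u,s)\bigr)\,\delta s\equiv 0$ modulo higher order, and the third yields an analogous relation with leading $\delta v$-coefficient $f_{33}(u,s)=s+O(2)$ and an additive $u\,\partial_s\bigl(sf_{34}(u,s)\bigr)\,\delta s$ contribution. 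The regularity-of-finite-order hypothesis on $f_{24}$ or $f_{34}$ ensures that this $2\times 2$ linear system in $(\delta v,\delta s)$ is non-singular at each order; hence $\delta v,\delta s\in\mathfrak m^{k+1}$, closing the induction and giving $j^k\hat v(0)=j^kv(0)$, $j^k\hat s(0)=j^ks(0)$ for all $k$.

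\textbf{Main obstacle.} The key difficulty is the invertibility of the $2\times 2$ system in Step 3 at each inductive step. The regularity-of-finite-order hypothesis on $f_{24}$ or $f_{34}$ is precisely what excludes nontrivial $s$-reparametrization symmetries of the normal form; without it, one could perturb $\hat s$ by a flat function of $s$ and still satisfy the identity. I expect to split into cases according to which of $f_{24},f_{34}$ is regular of finite order and to track the precise $s$-order at which its leading term drives the diagonal of the induced system.
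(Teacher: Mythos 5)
Your overall architecture (pin down $T$ and $\hat u$ from low-order coefficients, then induct on the jets of $\hat v$ and $\hat s$) matches the paper's, but two steps do not go through as written. The first genuine gap is the sign-fixing in Step 2. The $u^2v$-coefficient of the third component of $T\circ f_{{\rm normal}}^s\circ\phi$ is $T_{33}\,\hat v_v(0)\,\tfrac12(f_{33})_{uu}(0,0)$, because it enters through $\hat v\,f_{33}(\hat u,\hat s)$ and $\hat u^2=(\ep u)^2=u^2$ is insensitive to $\ep$. In the residual case $T=\operatorname{diag}(-1,1,-1)$, $\hat v_v(0)=-1$, $\hat u=-u$ --- which is consistent with everything you derived up to that point, including orientation-preservation and $\phi_3'(0)>0$ --- one has $T_{33}\hat v_v(0)=+1$, so this coefficient matches and the case is \emph{not} excluded. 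This is not a cosmetic issue: $(u,v,s)\mapsto(-u,-v,s)$ together with $T=\operatorname{diag}(-1,1,-1)$ is an exact symmetry of the model $(u,v^2,v(u^2\pm v^2)+vs)$, so any argument fixing $\ep$ must use a coefficient genuinely odd in $u$; the $us^{k+1}$-coefficient of the \emph{second} component does the job when $f_{24}$ is regular of order $k$, but the corresponding coefficient coming from $f_{34}$ is again invariant (since $T_{33}\ep=+1$), so the case where only $f_{34}$ is regular needs separate care. (The paper itself disposes of $T$ by invoking invariance of the lines $\R(1,0,0)$ and $\R(0,0,1)$, which likewise only determines $T$ up to these signs; but your proposed repair via $(f_{33})_{uu}(0,0)$ does not close this.)

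The second gap is Step 3, which is both misframed and, by your own admission, unfinished. The pair $(\delta v,\delta s)$ is not recovered by inverting a $2\times 2$ system: the would-be matrix has entries $2v$ and $f_{33}(u,s)$ that vanish at the origin, so ``non-singular at each order'' has no content without further structure. What actually closes the induction --- and what the paper does --- is a decoupling. Since $\hat s=\phi_3(s)$ and the term $u\hat sf_{24}(u,\hat s)$ are independent of $v$, grading the identity $\hat v^2+u\hat sf_{24}(u,\hat s)=v^2+usf_{24}(u,s)$ by powers of $v$ first forces the $v$-independent part $g_0$ of $\hat v-v$ to vanish (the $v^1$-coefficient gives $2g_0=0$) and then inductively kills every higher coefficient $g_i$, with no use whatsoever of the regularity hypothesis. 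Only afterwards does the residual $v$-independent identity $u\hat s f_{24}(u,\hat s)=usf_{24}(u,s)$, i.e.\ $\hat s^{\,k+1}h_0(\hat s)=s^{k+1}h_0(s)$ with $h_0(0)\ne0$, combined with regularity of finite order, eliminate $\hat s-s$ jet by jet; and in the case where only $f_{34}$ is regular one must already know $\hat v=v$ to high order before the third component can be used. You correctly identify the role of the regularity hypothesis (excluding flat reparametrizations of $s$), but the mechanism you propose for exploiting it is not the one that works, and the inductive step is left as an announced obstacle rather than proved.
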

\begin{proof}
We set ${f_{{\rm normal}}}^s(u,v,s)=f(u,v,s)$.
Since the image of $df_{(0,0,0)}$ is
generated by $(1,0,0)$.
The kernel of $df_{(0,0,0)}$ is generated by
$\partial_v$,
and 
$(1,0,0)\times f_{vv}(0,0,0)=(0,0,1)$
holds.
Since the subspaces generated by
$(1,0,0)$ and $(0,0,1)$
are independent of the coordinate system on
the source space,
it holds that $T$ is the identity.
Therefore $\hat u=u$ holds.
Looking at the second component,
$\hat v^2+u\hat sf_{24}(u,\hat s)=v^2+usf_{24}(u,s)$
holds.
By looking at the terms $v$ and $v^{k+1}$, together with
$\phi$ being orientation preserving,
we see $\hat{v}$ has the form
\begin{equation}\label{eq:uniqprf100}
\hat{v}=v+g_0(u,s)+\sum_{i=k}^r v^ig_i(u,s)+O_v(r+1)\quad(k\geq2)
\end{equation}
with $k=2$,
where $O_v(r+1)$ stands for functions $g(u,v,s)$ satisfying
$g(u,0,s)=g_v(u,0,s)=\cdots=
\partial^r g/\partial v^r(u,0,s)=0$.
We assume \eqref{eq:uniqprf100} holds for some $k$.
Then by $\hat v^2+u\hat sf_{24}(u,\hat s)=v^2+usf_{24}(u,s)$,
\begin{align*}
\hat v^2 +u\hat sf_{24}(u,\hat s)&=u\hat sf_{24}(u,\hat s)+g_0^2(u,s)+2vg_0(u,s)+v^2+2v^kg_0(u,s)g_k(u,s)\\
&+2v^{k+1}g_k(u,s)+2v^{k+1}g_0(u,s)g_{k+1}(u,s)+O_v(k+2).
\end{align*}
Comparing the terms $v$ and $v^{k+1}$ in the above equation,
$g_0(u,s) = 0$ holds, and we have
$g_k(u,s)=0$. By the induction on $k$,
the assertion for $\hat v$ holds.
If $f_{24}$ has regularity of finite order in $s$,
then there exists
an integer $k$ such that
$$
f_{24}(u,s)=s^kh_{0}(s)+uh_{1}(u,s),\quad
h_{0}(0)\ne0.
$$
We assume
$$
\hat s=s+\sum_{i=l}^r b_is^i\quad(l\geq2).
$$
Here, we set
$$h_0(s)=\sum_{j=0}^\infty h_{0j}s^j.$$
Then by $usf_{24}(u,s)=u\hat sf_{24}(u,\hat s)$  
we have
$$u\hat sf_{24}(u,\hat s)=
u\Bigl(s+\sum_{i=l}^r b_is^i\Bigr )^{k+1}
\sum_{j=0}^{\infty}\Bigl (s+\sum_{i=l}^rb_is^i\Bigr)^j h_{0j}+u^2h_1(u,\hat s)\Bigl(s+\sum_{i=l}^r b_is^i\Bigr).$$
Seeing the term of $us^{k+l}$,  the right hand side of $usf_{24}(u,s)=u\hat sf_{24}(u,\hat s)$  is
$(k+1)b_lh_{00}+h_{0l-1}$,
so we have $b_l=0$.
By  induction on $l$, we see the assertion for $\hat s$.
If $f_{34}$ has regularity of finite order in $s$,
we have $\hat u=u$ and $\hat v=v$ for some finite jet.
Substituting 
$
v=0
$
into the third component,
one can prove
the assertion for $\hat s$ in a similar way to the
case of $f_{24}$.
\end{proof}
\begin{remark}
We remark that the normal form for the $S_1^\pm$ singularity itself
is given in \cite{fh-distance} independently.
By a direct calculation,
${f_{{\rm normal}}}^s$ can be rewritten
in the form
\begin{align}
\left(u,\dfrac{v^2}{2}+\sum_{i=1}^kb_i(s)\dfrac{u^i}{i!},
\sum_{i+j=1}^ka_{ij}(s)\dfrac{u^iv^j}{i!j!}\right)\label{eq:s1form}
\end{align}
with $b_1(0)=a_{10}(0)= a_{01}(0)=a_{11}(0)=a_{02}(0)=0, a_{21}(0)\ne0$ and $a_{03}(0)\ne0$.
Here,
$$b_1(s)=sf_{24}(0,0)+O_s(2),\quad
b_2(s)=2f_{21}(0)+2s(f_{24})_u(0,0)+O_s(2),$$
$$b_3(s)=6(f_{21})_u(0)+3s(f_{24})_{uu}(0,0)+O_s(2)$$ and
$$a_{10}(s)=sf_{34}(0,0)+O_s(2),\quad
a_{01}(s)=s(f_{33})_s(0,0)/\sqrt{2}+O_s(2),$$
$$a_{20}(s)=2f_{31}(0)+2s(f_{34})_{u}(0,0)+O_s(2),\quad
a_{11}(s)=s(f_{33})_{us}(0,0)/\sqrt{2}+O_s(2),$$
$$a_{02}(s)=s(f_{32})_{s}(0,0,0)+O_s(2),$$
$$a_{30}(s)=6(f_{31})_u(0)+3s(f_{34})_{uu}(0,0)+O_s(2),$$
$$a_{21}(s)=(f_{33})_{uu}(0,0)/\sqrt{2}+s(f_{33})_{uus}(0,0)/\sqrt{2}+O_s(2),$$
$$a_{12}(s)=(f_{32})_{u}(0,0,0)+s(f_{32})_{us}(0,0,0)+O_s(2),$$
$$a_{03}(s)=3(f_{32})_{v}(0,0,0)/\sqrt{2}+3s(f_{32})_{vs}(0,0,0)/\sqrt{2}+O_s(2),$$
where $O_{s}(2)$ stands for the terms whose degrees with
respect to $s$ are greater than or equal to two.
 Changing $v\mapsto \sqrt{2}v $ in \eqref{eq:s1form}, we obtain \eqref{eq:normal}.
 Comparing the coefficients, we obtain the above results.

\end{remark}	
\section{Geometric deformations of $S_1^\pm$ singularities}\label{sec:geom}
In this section, we study differential geometric properties of 
deformations of $S_1^\pm$ singularities.
We set $f={f_{{\rm normal}}}^s$ (see \eqref{eq:normal}). 
Here, we assume $f_{33}(0,s)=s$ 
(see the remark just after the proof of Theorem \ref{thm:normal}).
\subsection{Geometric invariants of deformations}
Here we give a criterion for Whitney umbrellas.
\begin{lemma}{\rm \cite[p161, 162]{whitney}}\label{lem:wumb}
Let\/ $f:(\R^2,0)\to (\R^3,0)$ be a map-germ, and let\/ $(u,v)$ be 
a coordinate system satisfying\/
$\operatorname{Ker}df_0= \langle \partial_v\rangle$.
Then\/ $f$ is a Whitney umbrella if and only if\/
$|f_u, f_{vv}, f_{uv}|\ne0$, where\/ $|~|=\det(~)$.
\end{lemma}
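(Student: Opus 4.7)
The plan is to prove the two directions separately, exploiting the $\A$-invariance of the determinant expression at a singular point and the finite $\A$-determinacy of the Whitney umbrella.

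For the necessity direction, I would first verify that the quantity $|f_u,f_{vv},f_{uv}|(0)$ is multiplied by a nonzero scalar under coordinate changes that preserve the kernel condition. A target diffeomorphism $\psi$ transforms the three columns by the invertible matrix $d\psi_0$, so their linear independence is preserved. For a source diffeomorphism $\phi=(\phi_1,\phi_2)$, the requirement $\operatorname{Ker}d(f\circ\phi)_0=\langle\partial_v\rangle$ forces $\phi_{1v}(0)=0$, and using $f_v(0)=0$ a chain-rule expansion gives
\[
|(f\circ\phi)_u,(f\circ\phi)_{vv},(f\circ\phi)_{uv}|(0)=\phi_{1u}(0)^2\,\phi_{2v}(0)^3\,|f_u,f_{vv},f_{uv}|(0),
\]
and this factor is nonzero since the Jacobian of $\phi$ at $0$ equals $\phi_{1u}(0)\phi_{2v}(0)$. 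Evaluating the determinant at the origin for the model $(u,v)\mapsto(u,uv,v^2)$ gives columns $(1,0,0)$, $(0,0,2)$, $(0,1,0)$ with determinant $-2\neq 0$, establishing necessity.

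For sufficiency, assume the determinant is nonzero at the origin. Since $f_u(0)\neq 0$, a target isometry brings $f_u(0)$ onto the first coordinate axis; then using the first component of $f$ as the new source coordinate $u$ reduces $f$ to $(u,f_2(u,v),f_3(u,v))$ with $f_2,f_3=O(2)$. Writing $f_2=Au^2+2Buv+Cv^2+O(3)$ and $f_3=Du^2+2Euv+Fv^2+O(3)$, the determinant condition becomes $CE-BF\neq 0$. After possibly exchanging the second and third target coordinates we may assume $F\neq 0$; a further rotation of the target about the first axis arranges $C=0$, which in turn forces $BF\neq 0$. Subtracting functions of $u$ alone in the target removes $A$ and $D$, a target shear removes the $uv$-term from $f_3$, and source and target rescalings normalize the remaining coefficients, bringing $f$ to the form $(u,uv+O(3),v^2+O(3))$.

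The final step, which I expect to be the main obstacle, is eliminating the residual $O(3)$ terms by further $\A$-equivalence to obtain exactly $(u,uv,v^2)$. For this I would appeal to the classical finite $\A$-determinacy of the Whitney umbrella (as in \cite{whitney}, or as a consequence of Mather's theorem applied to the $\A$-tangent space of the $2$-jet $(u,uv,v^2)$): any map-germ whose $2$-jet is $\A$-equivalent to $(u,uv,v^2)$ belongs to the $\A$-orbit of $(u,uv,v^2)$, hence is a Whitney umbrella in the sense of the paper. Combined with necessity, this completes the proof.
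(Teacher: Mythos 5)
The paper does not prove this lemma at all: it is quoted directly from Whitney \cite{whitney} as a known criterion, so there is no in-paper argument to compare against. Your proposal is a correct self-contained derivation, and the computations check out: the target diffeomorphism acts on all three columns by $d\psi_0$ precisely because $f_v(0)=0$ kills the second-derivative terms of $\psi$; the source factor $\phi_{1u}(0)^2\phi_{2v}(0)^3$ is what the multilinearity/alternation of the determinant gives after discarding the columns' components along $f_u(0)$ and $f_{vv}(0)$; the model computes to $-2$; and in the sufficiency direction the condition $CE-BF\neq0$ is exactly $\tfrac14|f_u,f_{vv},f_{uv}|(0)$ and is preserved (up to nonzero factors) by every normalization you perform, so the chain swap $\to$ rotation $\to$ shear $\to$ rescaling legitimately produces the $2$-jet $(u,uv,v^2)$. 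The one point to be honest about is that the entire analytic content of Whitney's theorem sits in your last step, the $2$-$\A$-determinacy of the cross-cap: everything before it is linear algebra on $2$-jets, and the determinacy statement is what lets you discard the residual $O(3)$ terms. Citing it is legitimate (it follows from Mather's infinitesimal criterion, since $T\A_e f_0\supseteq \mathfrak{m}^3\mathcal{E}_2^3$ for $f_0=(u,uv,v^2)$, and is also part of Mond's classification), but you should be aware that you have essentially reduced the lemma to the theorem it comes from rather than reproved it from scratch; if a genuinely independent proof were required, the determinacy step is where the work would be.
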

The condition $S(f)  \neq \emptyset$ is equivalent to $s\le 0$.
We set $s=-\tilde{s}^2$.
If $(u, v)\in S(f)$, then $v = 0$ 
and $u$ is dependent on $\tilde s$, which we denote by $u(\tilde s)$.
Since $f_{33}(0,0)=(f_{33})_u(0,0)=0$ in \eqref{eq:normal}, we set
$$
f_{33}(u,s)=s+usc_{1}(s)+u^2c_{2}(s)+u^3c_{3}(s)+u^4c_{4}(u,s).
$$
Then we have the following theorem.
\begin{theorem}\label{thm:singcurve}
If\/ $(u,v) \in S(f)$ and\/ $c_{2}(0)>0$, then\/ $u(\tilde s)$ can be
expanded as follows\/{\rm :}
\begin{align}
u(\tilde s)
&=
\dfrac{1}{c_{20}}\tilde{s}+\dfrac{1}{2c_{20}^4}\Bigl(c_{1}(0)c_{20}^2-c_{3}(0)\Bigr)\tilde{s}^2\nonumber\\
&\hspace{5mm}+\dfrac{1}{8c_{20}^7}\Bigl(\bigl(c_{1}^2(0)+4(c_{2})_{s}(0)\bigr)c_{20}^4\label{eq:u}\\
&\hspace{5mm}-2\bigr(3c_{3}(0)c_{1}(0)+2c_{4}(0,0)\bigr)c_{20}^2+5c_{3}^2(0)\Bigr)\tilde s^3+O_{\tilde s}(4),
\nonumber
\end{align}
where\/ $c_{2}(0)=c_{20}^2$.
If\/ $\tilde s\ne 0$, then\/ $f$ at\/ $(u(\pm\tilde s),0)$ are both Whitney umbrellas.
\end{theorem}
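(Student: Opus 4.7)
The plan is to treat the two parts of the theorem separately: first derive the asymptotic expansion of the singular-curve parametrization $u(\tilde s)$, and second verify the Whitney umbrella criterion at the resulting points. Since $S(f)=\{(u,v)\mid v=0,\ f_{33}(u,s)=0\}$ and $s=-\tilde s^2$, the defining equation for the $u$-coordinate of the singular curve is
$$G(u,\tilde s):=-\tilde s^2-u\tilde s^2 c_1(-\tilde s^2)+u^2c_2(-\tilde s^2)+u^3c_3(-\tilde s^2)+u^4c_4(u,-\tilde s^2)=0.$$
At the origin, $G$ and both first derivatives vanish, while the Hessian is $\mathrm{diag}(2c_{20}^2,-2)$; since $c_{20}^2>0$, this is a nondegenerate (Morse) saddle, and its zero set is a pair of smooth curves crossing transversally at the origin. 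Equivalently, the rescaling $u=(\tilde s/c_{20})t$ turns the equation into $t^2-1+O(\tilde s)=0$, to which the implicit function theorem applies with branch $t(0)=1$, yielding a smooth function $u(\tilde s)$ with $u'(0)=1/c_{20}$. The second branch at the same value $s=-\tilde s^2$ is then $u(-\tilde s)$.

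The explicit coefficients in \eqref{eq:u} are obtained by inserting the ansatz $u(\tilde s)=a_1\tilde s+a_2\tilde s^2+a_3\tilde s^3+O(\tilde s^4)$ into $G=0$ and matching coefficients of $\tilde s^k$ for $k=2,3,4$. The order-$\tilde s^2$ relation forces $a_1=1/c_{20}$; the order-$\tilde s^3$ relation is linear in $a_2$ with leading coefficient $2c_{20}^2 a_1$ and contributions from $c_1(0)$ and $c_3(0)$, giving $a_2=(c_1(0)c_{20}^2-c_3(0))/(2c_{20}^4)$; the order-$\tilde s^4$ relation determines $a_3$ using in addition $(c_2)_s(0)$ and $c_4(0,0)$. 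The step is elementary but demands careful bookkeeping, because each $c_i(-\tilde s^2)$ must itself be expanded in $\tilde s^2$ and combined consistently with the powers of the unknown series.

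For the Whitney umbrella claim I apply Lemma \ref{lem:wumb} at $(u(\pm\tilde s),0)$ with $s=-\tilde s^2$. A direct calculation from \eqref{eq:normal} at $v=0$ gives
$$f_u=(1,*,*),\qquad f_{vv}=(0,2,2f_{32}(u,0,s)),\qquad f_{uv}=(0,0,(f_{33})_u(u,s)),$$
so the matrix with these three columns is triangular and
$$|f_u,f_{vv},f_{uv}|=2(f_{33})_u(u,s).$$
Along the singular curve, $(f_{33})_u(u(\tilde s),-\tilde s^2)=2c_2(0)u(\tilde s)+O(\tilde s^2)=2c_{20}\tilde s+O(\tilde s^2)$, which is nonzero for $\tilde s\neq 0$; the analogous evaluation at $u(-\tilde s)$ gives $-2c_{20}\tilde s+O(\tilde s^2)$, again nonzero. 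Hence both $(u(\pm\tilde s),0)$ are Whitney umbrellas by Lemma \ref{lem:wumb}. The main obstacle in the whole argument is the coefficient extraction in Step~2; the branch existence and the umbrella verification are essentially immediate once the Morse/triangular structure is noticed.
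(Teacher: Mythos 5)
Your proof is correct and follows essentially the same route as the paper: substitute a power series ansatz for $u(\tilde s)$ into $f_{33}(u,-\tilde s^2)=0$ and match coefficients to get \eqref{eq:u}, then apply Lemma \ref{lem:wumb} at $(u(\pm\tilde s),0)$. Your added justifications (the Morse/implicit-function-theorem argument for existence and smoothness of the branch, and the explicit evaluation $|f_u,f_{vv},f_{uv}|=2(f_{33})_u(u,s)$ with leading term $\pm2c_{20}\tilde s$) are correct and supply details the paper leaves implicit; the only item not fully carried out is the explicit value of the $\tilde s^3$ coefficient, which you correctly reduce to the same routine coefficient extraction that the paper also states without displaying.
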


\begin{proof}
Let us set
$$
u(\tilde s)=\alpha_1 \tilde s+\alpha_2 \tilde s^2+\alpha_3 \tilde s^3+\alpha_4(\tilde s)\tilde s^4.
$$
Differentiating $f_{33}(\alpha_1 \tilde s+\alpha_2 \tilde s^2+\alpha_3 \tilde s^3+\alpha_4(\tilde s)\tilde s^4,-\tilde s^2) =0$ three times, we have
$$
\alpha_1=\dfrac{1}{c_{20}},\quad \alpha_2=\dfrac{1}{2c_{20}^4}\Bigl(c_{1}(0)c_{20}^2-c_{3}(0)\Bigr),
$$
$$\alpha_3=\dfrac{1}{8c_{20}^7}\Bigl(\bigl(c_{1}^2(0)+4(c_{2})_{s}(0)\bigr)c_{20}^4
-2\bigr(3c_{3}(0)c_{1}(0)+2c_{4}(0,0)\bigr)c_{20}^2+5c_{3}^2(0)\Bigr).
$$
This proves the first assertion.
Since 
$\operatorname{Ker}df= \langle \partial_v \rangle$
at any point $(u(\tilde s),0)$,
by applying Lemma \ref{lem:wumb}, we obtain the assertion that
$f$ at $(u(\pm\tilde s),0)$ are both Whitney umbrellas,
proving the second assertion.
\end{proof}

Since Whitney umbrellas appear on the deformations of the $S_1^\pm$ singularities, we recall a formula for invariants of the Whitney umbrella.
 If $c_{2}(0)<0$, the same calculation can be done by setting $c_{2}(0) = -c_{20}^2$,
and we obtain the same results.
 
\begin{proposition}{\rm \cite[(12, 13, 14)]{hhnuy}}\label{prop:wuinv}
Let\/ $f:(\R^2,p) \to (\R^3,0)$ be a Whitney umbrella. For a coordinate system\/ $(u,v)$ satisfying\/ $\operatorname{Ker}df_0= \langle \partial_v\rangle$ and\/ $|f_u, f_{uv}, f_{vv}|>0$, the invariants\/ $a_{20},a_{11},a_{02}$ in Fact\/ {\rm \ref{fact:wumb}} can be written as\\\/
$$
a_{20}=\dfrac{1}{4}A^{-\frac{3}{2}}B^{\frac{1}{2}}C^{-2}D(p),\quad
a_{11}=\dfrac{1}{2}A^{-\frac{1}{2}}C^{-2}E(p),\quad
a_{02}=A^{\frac{1}{2}}B^{\frac{3}{2}}C^{-2}(p),$$
where\/
$$A=f_u\cdot f_u,\quad B=(f_u\times f_{vv})\cdot(f_u\times f_{vv}),\quad  C=|f_u, f_{uv},f_{vv}|,$$
$$\/
D=|f_u, f_{uu}, f_{vv}|^2+4|f_u, f_{uv}, f_{vv}||f_u, f_{uv}, f_{uu}|,
$$
$$\/
E=
2|f_u, f_{uv}, f_{vv}|
\begin{vmatrix}
f_u \cdot f_u & f_u \cdot f_{uv} \\
f_{vv} \cdot f_u & f_{vv} \cdot f_{uv}
\end{vmatrix}
-|f_u \times f_{vv}|^2|f_u, f_{uu}, f_{vv}|.
$$
\end{proposition}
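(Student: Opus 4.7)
My strategy is to verify the three formulas directly on the Bruce--West normal form and then to show that the right-hand sides are invariant under every change of coordinates preserving the two hypotheses ($\operatorname{Ker}df_0=\langle\partial_v\rangle$ and $|f_u,f_{uv},f_{vv}|>0$). Since Proposition \ref{thm:wumb} reduces any Whitney umbrella to the normal form via such a change, the formulas then transport from the normal form to an arbitrary coordinate system satisfying the hypotheses.

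First I evaluate the right-hand sides on the normal form $f(u,v)=(u,\,uv+O(3),\,\tfrac{1}{2}(a_{20}u^2+2a_{11}uv+a_{02}v^2)+O(3))$. At the origin $f_u=(1,0,0)$, $f_{uu}=(0,0,a_{20})$, $f_{uv}=(0,1,a_{11})$ and $f_{vv}=(0,0,a_{02})$, giving $A=1$, $B=a_{02}^2$, $C=a_{02}>0$, $|f_u,f_{uu},f_{vv}|=0$ and $|f_u,f_{uv},f_{uu}|=a_{20}$, so $D=4a_{20}a_{02}$. Applying the Lagrange identity $(x\cdot z)(y\cdot w)-(x\cdot w)(y\cdot z)=(x\times y)\cdot(z\times w)$ to the $2\times 2$ determinant in $E$ yields $E=2a_{02}^2 a_{11}$. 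Substituting into the three formulas exactly returns $a_{20}$, $a_{11}$ and $a_{02}$.

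Next I check invariance under the admissible transformations. For $T\in SO(3)$, invariance of $A,B,C,D,E$ is immediate since rotations preserve inner, cross and triple products. For a source diffeomorphism $\phi$ preserving the kernel condition, $d\phi_0=\begin{pmatrix}\alpha&0\\ \beta&\gamma\end{pmatrix}$ with $\alpha\gamma\ne 0$. The chain rule together with $f_v(0)=0$ shows that, modulo vectors parallel to $f_u$, the second derivatives transform as $\tilde f_{uu}\equiv\alpha^2 f_{uu}+2\alpha\beta f_{uv}+\beta^2 f_{vv}$, $\tilde f_{uv}\equiv\alpha\gamma f_{uv}+\beta\gamma f_{vv}$, $\tilde f_{vv}\equiv\gamma^2 f_{vv}$, while $\tilde f_u=\alpha f_u$ exactly. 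The $f_u$-parallel pieces vanish in every triple product containing $f_u$, and the Lagrange identity re-expresses the dot-product factor in $E$ as a cross product that is annihilated by them as well. A direct expansion then produces the clean scaling laws $\tilde A=\alpha^2 A$, $\tilde B=\alpha^2\gamma^4 B$, $\tilde C=\alpha^2\gamma^3 C$, $\tilde D=\alpha^6\gamma^4 D$, $\tilde E=\alpha^5\gamma^6 E$; in particular the apparently surviving linear-in-$\beta$ cross terms in $\tilde D$ and $\tilde E$ cancel among themselves. A short degree count confirms that each of $A^{-3/2}B^{1/2}C^{-2}D$, $A^{-1/2}C^{-2}E$ and $A^{1/2}B^{3/2}C^{-2}$ is homogeneous of degree zero in $(\alpha,\gamma)$, and hence invariant.

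The main technical burden is the cancellation of the $\beta$-dependent and second-jet-of-$\phi$ contributions in the precise combinations defining $D$ and $E$: without those cancellations the scaling laws would not be pure monomials in $(\alpha,\gamma)$ and invariance would fail. The Lagrange identity is the key mechanism, as it converts the asymmetric dot-product structure of $E$ into a cross-product structure to which the uniform ``$f_u$ wedges to zero'' argument applies; once these cancellations are in hand, the homogeneity check is routine.
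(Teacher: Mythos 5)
The paper does not prove this proposition: it is imported verbatim from \cite{hhnuy} (formulas (12)--(14) there), so there is no in-paper argument to compare yours against. Judged on its own, your strategy --- verify the formulas on the Bruce--West normal form, then show the right-hand sides are unchanged under every coordinate change compatible with the hypotheses --- is the natural one and essentially works. I checked the normal-form evaluation ($A=1$, $B=a_{02}^2$, $C=a_{02}$, $D=4a_{20}a_{02}$, $E=2a_{02}^2a_{11}$) and the scaling laws, including the cancellations you correctly identify as the crux: writing $P=|f_u,f_{uu},f_{vv}|$ and $Q=|f_u,f_{uv},f_{uu}|$, one gets $|\tilde f_u,\tilde f_{uu},\tilde f_{vv}|=\alpha^3\gamma^2P+2\alpha^2\beta\gamma^2C$ and $|\tilde f_u,\tilde f_{uv},\tilde f_{uu}|=\alpha^4\gamma Q-\alpha^3\beta\gamma P-\alpha^2\beta^2\gamma C$, and the $\beta$-terms in $\tilde D=\alpha^6\gamma^4D$ do cancel; likewise $\tilde E=\alpha^5\gamma^6E$ via the Lagrange identity.

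There is, however, one concrete gap in your final step: ``homogeneous of degree zero in $(\alpha,\gamma)$, hence invariant'' is not quite right, because the square roots force a distinction between $\alpha$ and $|\alpha|$. Under $A\mapsto\alpha^2A$ and $B\mapsto\alpha^2\gamma^4B$ one has $A^{-1/2}\mapsto|\alpha|^{-1}A^{-1/2}$, and the combination $\frac{1}{2}A^{-1/2}C^{-2}E$ therefore picks up the factor $\alpha^5/(|\alpha|\,\alpha^4)=\operatorname{sgn}\alpha$. The hypotheses only force $\gamma>0$ (from $\tilde C=\alpha^2\gamma^3C>0$), not $\alpha>0$, so the $a_{11}$-formula is invariant only up to sign. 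This is not an artifact of your method: the normal form itself determines $a_{11}$ only up to sign (compose with $(u,v)\mapsto(-u,v)$ on the source and $\operatorname{diag}(-1,-1,1)\in SO(3)$ on the target to send $a_{11}\mapsto-a_{11}$ while preserving both hypotheses). To close the argument you must either impose the additional normalization $a_{11}\geq 0$ and insert an absolute value in the $a_{11}$-formula, or fix the orientation of the $u$-axis as part of the data, as is implicitly done in \cite{hhnuy}. The combinations giving $a_{20}$ and $a_{02}$ are genuinely invariant, since there the powers of $|\alpha|$ work out to be even.
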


As we saw above, $(u(\tilde s),0)$ is a
singular point of $f={f_{{\rm normal}}}^s$.
Considering the above invariants $a_{20}, a_{11}, a_{02}$ at 
$(u(\tilde s),0)$,
one can regard these invariants
as functions of $\tilde s$.
Let
$a_{20}(\tilde{s})$, $a_{11}(\tilde{s})$  $a_{02}(\tilde{s})$
denote this dependency.
We remark that
$$\lim_{\tilde s\to0}|f_u, f_{uv}, f_{vv}|(u(\tilde s),0)=0.$$
Thus these functions generally diverge at the $S_1^\pm$ singularities. More precisely, we obtain the following.

\begin{theorem}\label{thm:invexp}
The functions\/  $a_{20}(\tilde{s}), a_{11}(\tilde{s})$ and\/ $a_{02}(\tilde{s})$ can be expanded as follows:\\
\medskip\/
$a_{20}(\tilde{s})
=\dfrac{1}{\tilde{s}^2}\Bigl(\dfrac{f_{31}^2(0)}{2c_{20}^2}\\
\hspace{20mm}
-\dfrac{f_{31}(0)\bigl(f_{31}(0)c_{3}(0)-c_{20}^2(3(f_{31})_u(0)-d_{1}(0,0,0)f_{21}(0))\bigr)}{c_{20}^5}\tilde{s}
+O_{\tilde s}(2)\Bigr)$,\\
\medskip
$a_{11}(\tilde{s})=\dfrac{1}{\tilde{s}^2}\Bigl(
\dfrac{f_{31}(0)}{2c_{20}^2}-\dfrac{2c_{3}(0)f_{31}(0)-c_{20}^2(3(f_{31})_u(0)-d_{1}(0,0,0)f_{21}(0))}{2c_{20}^5}\tilde{s}+O_{\tilde s}(2)\Bigr),$\\
$a_{02}(\tilde{s})
=\dfrac{1}{\tilde{s}^2}\Bigl(\dfrac{1}{2c_{20}^2}+O_{\tilde s}(1)\Bigr)$,\\
\end{theorem}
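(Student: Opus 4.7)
The plan is to expand the five polynomial ingredients $A, B, C, D, E$ of the invariant formulas for $a_{20}, a_{11}, a_{02}$ as Taylor series in $\tilde s$ along the singular curve $(u(\tilde s), 0)$, $s=-\tilde s^2$, and then combine them. The $\tilde s^{-2}$ divergence is driven entirely by $C^{-2}$; all other factors are regular at $\tilde s = 0$.

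First, since $f$ in \eqref{eq:normal} depends on $v$ only through $v^2$ and $vf_{33}$, the partial derivatives restricted to $\{v=0\}$ simplify drastically:
\begin{align*}
f_{uv}|_{v=0} &= (0, 0, (f_{33})_u(u,s)),\\
f_{vv}|_{v=0} &= (0, 2, 2 f_{32}(u,0,s)).
\end{align*}
Therefore $C = |f_u, f_{uv}, f_{vv}|$ collapses to $-2 (f_{33})_u(u,s)$ on $\{v=0\}$, and substituting $s = -\tilde s^2$ together with the expansion of $u(\tilde s)$ from Theorem \ref{thm:singcurve} into $(f_{33})_u = sc_1(s) + 2u c_2(s) + 3 u^2 c_3(s) + O(u^3)$ yields $C = -4 c_{20}\tilde s + O(\tilde s^2)$, providing the pole $C^{-2} = (16 c_{20}^2)^{-1}\tilde s^{-2}(1 + O(\tilde s))$.

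Next, for the leading behaviour of the remaining factors, one has $f_u|_0 = (1, 0, 0)$, $f_{vv}|_0 = (0, 2, 0)$ and $f_{uu}|_0 = (0, 2 f_{21}(0), 2 f_{31}(0))$, hence $A(0) = 1$, $B(0) = 4$ and $|f_u, f_{uu}, f_{vv}|_0 = -4 f_{31}(0)$. Since $C|_0 = 0$ the cross term of $D$ drops out at order zero, giving $D(0) = 16 f_{31}(0)^2$, and a parallel computation gives $E(0) = 16 f_{31}(0)$. Substituting into $a_{20} = \tfrac{1}{4} A^{-3/2} B^{1/2} C^{-2} D$ and the analogous formulas recovers the leading coefficients $f_{31}(0)^2/(2c_{20}^2)$, $f_{31}(0)/(2c_{20}^2)$, and $1/(2c_{20}^2)$.

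For the $O(\tilde s^{-1})$ corrections one expands each of $A, B, C, D, E$ to first order in $\tilde s$, using the second coefficient $\alpha_2$ of $u(\tilde s)$ from Theorem \ref{thm:singcurve} and the Taylor expansions of $f_{21}, f_{31}, f_{32}, f_{33}$ at the origin. The main obstacle is purely bookkeeping: at first order the previously dropped cross term $4|f_u, f_{uv}, f_{vv}||f_u, f_{uv}, f_{uu}|$ inside $D$ and the first term $2C(\cdot)$ inside $E$ both reappear (each of order $\tilde s$ through its $C$ factor), and one must track these together with the subleading part of $C^{-2}$ and the auxiliary coefficient $d_1$ of the statement. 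No conceptually new step arises; the task is simply to organise the expansion consistently and collect terms.
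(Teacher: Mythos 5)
Your proposal follows essentially the same route as the paper's proof: expand $A,B,C,D,E$ in $\tilde s$ along $(u(\tilde s),0)$ using the expansion of $u(\tilde s)$ from Theorem \ref{thm:singcurve}, isolate the pole from $C^{-2}$ via $C=-2(f_{33})_u=-4c_{20}\tilde s+O(\tilde s^2)$, and your leading coefficients $A_0=1$, $B_0=4$, $D_0=16f_{31}^2(0)$, $E_0=16f_{31}(0)$ all agree with the values the paper records. One small correction to your bookkeeping remark: since $f_{uv}|_{v=0}=(0,0,(f_{33})_u)$ also vanishes at $\tilde s=0$, the cross term $4|f_u,f_{uv},f_{vv}|\,|f_u,f_{uv},f_{uu}|$ in $D$ is $O(\tilde s^2)$ and the term $2|f_u,f_{uv},f_{vv}|(\cdots)$ in $E$ is $O(\tilde s^3)$, so neither actually contributes to the first-order coefficients $D_1$, $E_1$; those come solely from expanding $|f_u,f_{uu},f_{vv}|$ (which produces the $d_1(0,0,0)f_{21}(0)$ and $(f_{31})_u(0)$ terms) and the subleading part of $C$.
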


\begin{proof}
Since  
$f_{32}(0,0)=0$, the function $f_{32}$ has the form
$$
f_{32}(u,v,s)=ud_{1}(u,v,s)+vd_{2}(u,v,s)+sd_{3}(u,v,s).
$$
Since the functions $A, B, C, D$ and $E$ depend on $\tilde s$, they can be expanded as follows:
$$A(\tilde s)=A_0+A_1\tilde s+A_2 \tilde s^2+A_3(\tilde s)\tilde s^3,\quad
B(\tilde s)=B_0+B_1\tilde sB_2 \tilde s^2+B_3(\tilde s) \tilde s^3,$$
$$C(\tilde s)=C_1\tilde s+C_2\tilde s^2+C_3(\tilde s) \tilde s^3, \quad
D(\tilde s)=D_0+D_1\tilde s+D_2 \tilde s^2+D_3(\tilde s) \tilde s^3,$$
$$E(\tilde s)=E_0+E_1\tilde s+E_2\tilde s^2+E_2(\tilde s) \tilde s^3.$$
Then it holds that
\begin{align*}
a_{20}(\tilde{s})&=
\dfrac{1}{\tilde s^2}\Bigl(\dfrac{B_0^{1/2}D_0}{4A_0^{3/2}C_1^2}\\
&\hspace{10mm}
+\dfrac{-3A_1B_0C_1D_0+A_0(B_1C_1D_0-4B_0C_2D_0+2B_0C_1D_1)}{8A_0^{5/2}B_0^{1/2}C_1^3}\tilde s+O_{\tilde s}(2)\Bigr),\\
a_{11}(\tilde{s})
&=\dfrac{1}{\tilde s^2}\Bigl(\dfrac{E_0}{2A_0^{1/2}C_1^2}+\dfrac{-A_1C_1E_0-4A_0C_2E_0+2A_0C_1E_1}{4A_0^{3/2}C_1^3}\tilde s+O_{\tilde s}(2)\Bigr),\\
a_{02}(\tilde{s})&=\dfrac{1}{\tilde s^2}\Bigl(\dfrac{A_0^{1/2}B_0^{3/2}}{C_1^2}+O_{\tilde s}(1)\Bigr).
\end{align*}
Using \eqref{eq:u}, we have
$$A_0=1,\quad A_1=0,\quad A_2=\dfrac{4(f_{21}^2(0)+f_{31}^2(0))}{c_{20}^2},\quad B_0=4,\quad B_1=0,$$
$$ B_2=\dfrac{4(d_1^2(0,0,0)+4f_{31}^2(0))}{c_{20}^2},\quad C_1=-4c_{20},\quad C_2=-\dfrac{4c_{3}(0)}{c_{20}^2},$$
$$D_0=16f_{31}^2(0), \quad D_1=\dfrac{32f_{31}(0)(3(f_{31})_u(0)-d_{1}(0,0,0)f_{21}(0))}{c_{20}},$$
\begin{align*}
D_2&=\dfrac{16}{c_{20}^4}\Bigl(2{c_{20}}^4\bigr(2f_{21}(0)
  + {f_{31}}(0)
   ({f_{21}}(0)
   {d_3}(0,0,0)-(f_{34})_{u}(0,0))\bigr)\\
   &+{c_{20}}^2
   \bigl(-{d_1}(0,0,0)
   ({c_1}(0)
   {f_{21}}(0) {f_{31}}(0)+6
   {f_{31}}(0)
   {f_{21}}'(0)+6
   {f_{21}}(0)
   {f_{31}}'(0))\\
   &-2{f_{21}}(0) {f_{31}}(0)
   (d_1)_{u}(0,0,0)+{f_{21}}^2(0)
   {d_1}^2(0,0,0)+3
   {c_1}(0) {f_{31}}(0)
   {f_{31}}'(0)\\
   &+6
   {f_{31}}(0)
   {f_{31}}''(0)+9
   ({f_{31}}'(0))^2\bigr)
   +{c_3}(0) {f_{31}}(0)
   ({f_{21}}(0)
   {d_1}(0,0,0)-3
   {f_{31}}'(0))
  \Bigr),
   \end{align*}
$$ E_0=16f_{31}(0),\quad E_1=\dfrac{16(3(f_{31})_u(0)-d_{1}(0,0,0)f_{21}(0))}{c_{20}},$$
\begin{align*}
E_2=&\dfrac{8}{c_{20}^4}\Bigl(2
   {c_{20}}^4
  ({f_{21}}(0)
   {d_3}(0,0,0)-(f_{34})_{u}(0,0))\\
   &+
   {c_{20}}^2
  \bigl({c_1}(0) (3
   {f_{31}}'(0)-{f_{21}}(0)
   {d_1}(0,0,0))\\
   &-2
   {f_{21}}(0)
   (d_1)_{u}(0,0,0)-6
   {d_1}(0,0,0)
   {f_{21}}'(0)\\
   &+2
   {f_{31}}(0)
   {d_1}^2(0,0,0)+6
   {f_{31}}''(0)+8
   {f_{31}}^3(0)\bigr)\\
   &+{c_3}(
   0) ({f_{21}}(0)
   {d_1}(0,0,0)-3
   {f_{31}}'(0))\Bigr).
   \end{align*}
Hence we get the assertion.
\end{proof}

The following corollary gives geometric meanings to the
above expressions for
 $a_{20}(\tilde{s})$, $a_{11}(\tilde{s})$, $a_{02}(\tilde{s})$.

\begin{corollary}\label{cor:infinity}
Taking the limit\/
$\tilde{s} \rightarrow 0$, then the following hold:
The condition\/ $f_{31}(0) \ne 0$ is equivalent tot\/
$a_{20}(\tilde{s})$ diverging to\/ $+\infty$, and\/
$f_{31}(0)=0$ is equivalent to\/ $a_{20}(\tilde{s})$ being bounded.
The condition\/ $f_{31}(0) \ne 0$ or\/ $2c_{3}(0)f_{31}(0)-c_{20}^2(3(f_{31})_u(0)-d_{1}(0,0,0)f_{21}(0))\ne 0$ is equivalent to\/
$a_{11}(\tilde s)$ diverging, 
and\/ $f_{31}(0)=3(f_{31})_u(0)-d_{1}(0,0,0)f_{21}(0)=0$ is equivalent to\/ $a_{11}(\tilde s)$ being bounded.
The invariant\/ $a_{02}(\tilde{s})$ diverges to\/ $+\infty$.
\end{corollary}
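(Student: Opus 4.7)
The plan is to derive the dichotomy directly from the asymptotic expansions of $a_{20}(\tilde s)$, $a_{11}(\tilde s)$, $a_{02}(\tilde s)$ that are already established in Theorem \ref{thm:invexp}. Each of them has the form $\tilde s^{-2}\bigl(\alpha_0+\alpha_1\tilde s+O(\tilde s^2)\bigr)$, so everything reduces to inspecting the first two coefficients $\alpha_0$ and $\alpha_1$: if $\alpha_0\neq 0$ the invariant behaves like $\alpha_0/\tilde s^2$ and diverges (with sign equal to the sign of $\alpha_0$); if $\alpha_0=0$ but $\alpha_1\neq 0$ it behaves like $\alpha_1/\tilde s$ and still diverges; if both vanish, the remaining tail is a smooth function of $\tilde s$ and the invariant is bounded near $0$.

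For $a_{20}(\tilde s)$ I would first observe that $\alpha_0=f_{31}^2(0)/(2c_{20}^2)$ is a square divided by the positive number $c_{20}^2$, hence nonnegative, strictly positive exactly when $f_{31}(0)\neq 0$. In that case $a_{20}(\tilde s)\to+\infty$ (the positivity of the leading coefficient determining the sign). When $f_{31}(0)=0$, both $\alpha_0$ and $\alpha_1$ carry $f_{31}(0)$ as an overall factor in the formula from Theorem \ref{thm:invexp}, so they simultaneously vanish and $a_{20}(\tilde s)$ is bounded. For $a_{02}(\tilde s)$ the leading coefficient is $1/(2c_{20}^2)>0$ unconditionally, so $a_{02}(\tilde s)\to+\infty$ always; the $O(1)$ remainder inside the bracket is dominated after division by $\tilde s^2$ by the positive leading constant.

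For $a_{11}(\tilde s)$ I would split into cases. Writing $\alpha_0=f_{31}(0)/(2c_{20}^2)$ and $\alpha_1=-\bigl(2c_3(0)f_{31}(0)-c_{20}^2(3(f_{31})_u(0)-d_{1}(0,0,0)f_{21}(0))\bigr)/(2c_{20}^5)$, divergence is equivalent to $\alpha_0\neq 0$ or $\alpha_1\neq 0$. If $f_{31}(0)\neq 0$ then $\alpha_0\neq 0$ and divergence is immediate. If $f_{31}(0)=0$, the $2c_3(0)f_{31}(0)$ piece in the numerator of $\alpha_1$ dies, leaving only $3(f_{31})_u(0)-d_{1}(0,0,0)f_{21}(0)$; nonvanishing of this remnant is the exact second alternative of the ``or'' clause in the statement, and under it $a_{11}(\tilde s)\sim C/\tilde s$ still diverges, whereas its simultaneous vanishing with $f_{31}(0)=0$ kills both $\alpha_0$ and $\alpha_1$, giving boundedness.

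The main obstacle is purely bookkeeping: verifying that the putative $\tilde s^{-1}$ term in $a_{20}$ genuinely disappears when $f_{31}(0)=0$, and that the $\alpha_1$ for $a_{11}$ reduces as claimed under the same hypothesis. Both are one-line inspections of the explicit formulas in Theorem \ref{thm:invexp}; the only conceptually non-routine input is the sign assertion for $a_{20}$ and $a_{02}$, which is settled by the fact that the leading coefficients are squares (respectively, reciprocals of squares) up to a positive factor. No geometric input beyond Theorem \ref{thm:invexp} is required.
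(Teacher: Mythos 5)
Your proposal is correct and follows exactly the route the paper intends: the corollary is stated without a separate proof as an immediate consequence of the expansions in Theorem \ref{thm:invexp}, and your case analysis of the leading coefficients $\alpha_0,\alpha_1$ (including the observation that both carry $f_{31}(0)$ as a factor for $a_{20}$, and that the numerator of $\alpha_1$ for $a_{11}$ reduces to $c_{20}^2\bigl(3(f_{31})_u(0)-d_1(0,0,0)f_{21}(0)\bigr)$ when $f_{31}(0)=0$) is precisely the intended reading-off. The sign arguments via positivity of $f_{31}^2(0)/(2c_{20}^2)$ and $1/(2c_{20}^2)$ are likewise what the paper relies on.
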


We remark that there is a relationship between 
this corollary and the singular curvature of cuspidal edges.
Let $f:(\R^2,0)\to(\R^3,0)$ be a front, and let $0$
be a non-degenerate singular point.
Assume that the singular points of $f$ consist of
cuspidal edges except for $0$.
Then the singular curvature diverges to $-\infty$
near $0$
(see \cite[Corollary 1.14]{front} for details).
We remark that Corollary \ref{cor:infinity} can be
interpreted as a variant of this fact
for $S_1^\pm$ singularities.
The
geometric meaning of $a_{20}a_{02}$ diverging to $+\infty$ will be
clarified in Corollary
\ref{cor:hyperbola} in Section
\ref{sec:focalconic}.

We consider the behavior of the Gaussian curvature on the
set of regular points in the deformation of
an $S_1^\pm$ singularity.

\begin{theorem}\label{thm:gauss}
Let\/ $f:(\R^2\times \R,0) \to (\R^3,0)$ be a deformation of an\/
$S_1^\pm$ singularity, given by\/ $f={f_{{\rm normal}}}^s$\/ $($see $\eqref{eq:normal})$.
We assume\/ $f_{31}(0)\ne0$.
For any\/ $\theta\in (0,\pi)\cup(\pi,2\pi)$,
we set\/
$$
R=\left\{
\begin{array}{ll}
1&\quad(c_{20}^2+3d_{2}(0,0,0)>0)\\
\sqrt{
\dfrac{
c_{20}^2}{-\sin^2\theta(c_{20}^2+3d_{2}(0,0,0))+c_{20}^2}
}
&\quad(c_{20}^2+3d_{2}(0,0,0)\leq0).
\end{array}
\right.
$$
Then there exists\/ $\tilde s_0>0$ such that
for any\/ $\tilde s$ satisfying\/ $0<|\tilde s|<\tilde s_0$,
the sign of the Gaussian curvature
coincides with
that of the product
of\/ $\tilde s$,\/ $\sin\theta$ and\/ $f_{31}(0)$
on the line segment\/
$$
\{(r\cos\theta,r\sin\theta)\,|\,
0<r<Ru(\tilde s)\},
$$
where\/ $(u(\tilde s),0)\in S({f_{{\rm normal}}}^{-\tilde s^2})$.
\end{theorem}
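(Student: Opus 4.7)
The plan is to express the sign of the Gaussian curvature as the sign of a single determinantal expression $\Delta$, then extract its leading-order factor after substituting the normal form, and verify that this factor has the claimed sign on the segment.

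First, at a regular point one has $EG-F^2=|f_u\times f_v|^2>0$, so $\sgn(K)=\sgn(\Delta)$ with
$$\Delta := |f_u,f_v,f_{uu}|\cdot|f_u,f_v,f_{vv}|-|f_u,f_v,f_{uv}|^2.$$
Since the first component of $f_{\mathrm{normal}}^s$ is $u$ (so $(f_u)_1=1$ and $(f_v)_1=(f_{\bullet\bullet})_1=0$), each triple product simplifies under first-row expansion to $(f_v)_2(f_{\bullet\bullet})_3-(f_v)_3(f_{\bullet\bullet})_2$. Setting $a=(f_v)_2=2v$, $b=(f_v)_3=f_{33}(u,s)+2vf_{32}+v^2(f_{32})_v$, $p_{\bullet\bullet}=(f_3)_{\bullet\bullet}$, $q_{\bullet\bullet}=(f_2)_{\bullet\bullet}$, and using $q_{uv}=0,\,q_{vv}=2$, one obtains the quadratic form
$$\Delta = a^2(p_{uu}p_{vv}-p_{uv}^2) - ab(2p_{uu}+p_{vv}q_{uu}) + 2b^2 q_{uu}.$$

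Next, I would substitute $u=r\cos\theta$, $v=r\sin\theta$, $s=-\tilde s^2$ and use the expansion $f_{33}(u,s)=s+usc_1(s)+u^2c_2(s)+O(u^3)$ together with the decomposition $f_{32}(u,v,s)=ud_1+vd_2+sd_3$ (writing $d_j^0=d_j(0,0,0)$) to get
$$b = -\tilde s^2 + r^2\bigl(c_{20}^2\cos^2\theta + 2d_1^0 \cos\theta\sin\theta + 3 d_2^0\sin^2\theta\bigr) + O(\tilde s^2 r + r^3),$$
and similarly $p_{uu}=2f_{31}(0)+O(r+\tilde s^2)$, $p_{uv}=2r(c_{20}^2\cos\theta+d_1^0\sin\theta)+O(r^2+\tilde s^2)$, $p_{vv}=2r(d_1^0\cos\theta+3d_2^0\sin\theta)-2\tilde s^2 d_3^0+O(r^2)$. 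On the segment, $r\le R u(\tilde s)\sim\tilde s/c_{20}$, so $r$ and $\tilde s$ are small and comparable; thus $a^2(p_{uu}p_{vv}-p_{uv}^2)$ and $ab(2p_{uu}+p_{vv}q_{uu})$ are both of order $\tilde s^3$ while $2b^2 q_{uu}=O(\tilde s^4)$.

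The crucial cancellation occurs when collecting the $O(\tilde s^3)$ terms: the $d_1^0$-dependent contributions from the two pieces cancel, and the computation produces
$$\Delta = 8\,f_{31}(0)\,r\sin\theta\,\bigl[\tilde s^2 - r^2\bigl(c_{20}^2 - \sin^2\theta(c_{20}^2+3d_2^0)\bigr)\bigr] + O(\tilde s^4).$$
The bracketed factor is positive on $0<r<R u(\tilde s)$: when $c_{20}^2+3d_2^0>0$ and $R=1$ this follows from $u(\tilde s)^2 c_{20}^2=\tilde s^2+O(\tilde s^3)$ together with $c_{20}^2-\sin^2\theta(c_{20}^2+3d_2^0)<c_{20}^2$; when $c_{20}^2+3d_2^0\le 0$, the definition of $R$ is tailored so that the bracket vanishes exactly at $r=R u(\tilde s)$ and is positive for smaller $r$. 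Hence for sufficiently small $|\tilde s|$ the sign of $\Delta$, and therefore of $K$, coincides with $\sgn(\tilde s\,\sin\theta\,f_{31}(0))$, the factor $\sgn(\tilde s)$ recording on which side of the origin the relevant Whitney umbrella $u(\tilde s)$ lies.

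The main obstacle is the algebraic cancellation in step three: both $a^2\alpha$ and $ab\beta$ are nominally of the same order $\tilde s^3$ and each is a fairly complicated combination of $c_{20}^2,d_1^0,d_2^0,d_3^0,f_{31}(0)$, but one must verify that their combination simplifies exactly to the $R$-adapted factor $\tilde s^2 - r^2[c_{20}^2-\sin^2\theta(c_{20}^2+3d_2^0)]$. Moreover, since this factor approaches zero near $r=R u(\tilde s)$, the $O(\tilde s^4)$ remainder must be controlled uniformly in $\theta$ so that $\tilde s_0>0$ can be chosen to guarantee the leading term dictates the sign on the whole open segment.
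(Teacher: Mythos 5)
Your proposal is correct and follows essentially the same route as the paper: your $\Delta$ is exactly the paper's $K=LN-M^2$, your leading-order expression $8f_{31}(0)\,r\sin\theta\,\bigl[\tilde s^2-r^2\bigl(c_{20}^2-\sin^2\theta(c_{20}^2+3d_2(0,0,0))\bigr)\bigr]$ becomes the paper's $\tilde s^3\tilde K$ upon substituting $r=ku(\tilde s)$, and your positivity check of the bracketed factor on $0<r<Ru(\tilde s)$ is precisely the paper's Lemma on $\tilde g(k)>0$. The cancellation of the $d_1(0,0,0)$-terms that you flag does indeed occur, so the computation closes as claimed.
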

This theorem implies that the sign of $f_{31}(0)$
gives the sign of the Gaussian curvature
around the singular points.
\begin{proof}
Let us set
$$
L=f_{uu} \cdot (f_{u} \times f_{v}),\quad
M=f_{uv} \cdot (f_{u} \times f_{v}),\quad
N=f_{vv} \cdot (f_{u} \times f_{v})
$$
and $K=LN-M^2$ as functions of $(u,v,s)$.
Then the sign of the Gaussian curvature coincides
with that of $K$ except for the set of singular points.
We set $s=-\tilde s^2$,
and
$r=ku(\tilde s)$ $(0<k<R)$.
Then $K(r\cos \theta,r\sin\theta,-\tilde s^2)$
can be calculated as
\begin{equation}\label{eq:ktildesk}
K=\tilde s^3 \tilde K,\quad
\Bigl(\tilde K=
\dfrac{8kf_{31}(0)\sin\theta}{c_{20}^3}
(c_{20}^2-c_{20}^2k^2\cos^2\theta+3 k^2d_{2}(0,0,0)\sin^2\theta)
+O_{\tilde s}(1)\Bigr),
\end{equation}
where $O_{\tilde s}(1)$ stands for the terms whose degrees with
respect to $\tilde s$ is greater than or equal to one.
Let us set
$$
g(k)=\dfrac{8kf_{31}(0)\sin\theta}{c_{20}^3}\tilde g(k),\quad
\Big(
\tilde g(k)=c_{20}^2-c_{20}^2k^2\cos^2\theta+3 k^2d_{2}(0,0,0)\sin^2\theta
\Big).
$$
Then we have the following:
\begin{lemma}\label{lem:gkpositive}
It holds that\/ $\tilde g(k)>0$ for\/ $0<k<R$.
\end{lemma}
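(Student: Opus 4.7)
\medskip\noindent\textbf{Proof plan.} The plan is to reduce the statement to two one-line algebraic checks by making a single change in the expression for $\tilde g(k)$. Using $\cos^2\theta = 1 - \sin^2\theta$ I will rewrite
$$
\tilde g(k)
= c_{20}^2(1-k^2) + k^2\sin^2\theta\,\bigl(c_{20}^2+3d_{2}(0,0,0)\bigr)
= c_{20}^2 - k^2\bigl(c_{20}^2 - \sin^2\theta\,(c_{20}^2+3d_{2}(0,0,0))\bigr).
$$
Both forms of the same identity are useful: the first makes the sign of $\tilde g$ transparent in the case when $c_{20}^2+3d_{2}(0,0,0)>0$, while the second exhibits $\tilde g$ as $c_{20}^2$ minus a positive multiple of $k^2$ in the other case, with the critical value of $k^2$ equal to exactly $R^2$.

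Next I will split into the two cases of the definition of $R$. In the case $c_{20}^2+3d_{2}(0,0,0)>0$, for which $R=1$, the first rewrite shows that $\tilde g(k)$ is a sum of two positive terms on $0<k<1$: the first is positive because $c_{20}^2>0$ (by the standing assumption $c_{2}(0)=c_{20}^2$ together with $c_{2}(0)>0$) and $1-k^2>0$, and the second is positive because $\sin\theta\ne0$ under the hypothesis $\theta\in(0,\pi)\cup(\pi,2\pi)$. In the complementary case $c_{20}^2+3d_{2}(0,0,0)\le0$ the bracket in the second rewrite satisfies
$$
c_{20}^2 - \sin^2\theta\,(c_{20}^2+3d_{2}(0,0,0)) \;\ge\; c_{20}^2 \;>\; 0,
$$
so $R^2$ is well defined and positive, and the inequality $\tilde g(k)>0$ is literally equivalent to $k^2<R^2$, which is the hypothesis.

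I do not anticipate a substantive obstacle: the two cases in the definition of $R$ have been tailored to the quadratic $\tilde g(k)$, and the only care needed is to verify that $R$ is a real positive number in the second case (which follows from the sign of the bracket above) and to invoke $\sin\theta\ne0$ and $c_{20}^2>0$ at the right place in the first case.
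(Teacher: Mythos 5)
Your proof is correct and follows essentially the same route as the paper: the same case split on the sign of $c_{20}^2+3d_{2}(0,0,0)$, with the first case settled by positivity of a function linear in $k^2$ on $(0,1)$ and the second by observing that $k^2<R^2$ is algebraically equivalent to $\tilde g(k)>0$. Your explicit rewrites merely spell out the "direct calculation" the paper leaves implicit (and your use of $\le 0$ in the second case cleanly covers the boundary value the paper's proof skips over).
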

\begin{proof}
If $c_{20}^2+3d_{2}(0,0,0)>0$, namely $R=1$,
then $\tilde g(k)$ is a first-degree function of $k^2$.
Since $\tilde g(0)=c_{20}^2>0$ and $\tilde g(1)=\sin^2\theta(c_{20}^2+3d_{2}(0,0,0))>0$,
we see $\tilde g(k)>0$.
If $c_{20}^2+3d_{2}(0,0,0)<0$ then by the definition of $R$ and $k<R$,
$$
k^2<
\dfrac{c_{20}^2}{-\sin^2\theta(c_{20}^2+3d_{2}(0,0,0))+c_{20}^2}
$$
holds.
By a direct calculation, this is equivalent to $\tilde g(k)>0$.
Thus we have $\tilde g(k)>0$ for $0<k<R$.
\end{proof}
We return to the proof of Theorem \ref{thm:gauss}.
We fix $k$ satisfying $0<k<R$.
Since
the function $\tilde K$ is continuous,
for any $\ep>0$, there exists $\delta>0$ such that
whenever $|\tilde s|<\delta$, we have
$$
|\tilde K(k,\tilde s)-\tilde K(k,0)|<\ep.
$$
Since
$
\tilde K(k,\tilde s)-\tilde K(k,0)
=
\tilde K(k,\tilde s)
-
g(k),
$
and $g(k)$ is a non-zero constant,
by taking $\ep$ smaller than
$|g(k)|$,
then whenever $|\tilde s|<\delta$,
the sign of $\tilde K(k,\tilde s)$ and
that of $\tilde K(k,0)$ coincide.
By \eqref{eq:ktildesk}, the sign of $\tilde K(k,0)$ coincides with
that of the product of $\sin\theta$, $f_{31}(0)$, $\tilde g(k)(>0)$.
Hence by $K=\tilde s^3 \tilde K$,
we have the assertion.
\end{proof}

Let us set $f^\pm(u,v,s)=(u,v^2+us, \pm u^2+v^3+ u^2v+vs)$. 
Then $f^+$ is the case of $f_{31}(0)>0$, and $f^-$ is the case of $f_{31}(0)<0$.
The figures of $f^\pm$
with $s=-1/2$ are
given in Figure \ref{fig:gauss_plus}. On both surfaces,
one can observe that
the Gaussian curvature changes sign
across the line connecting the two singular points.
On the other hand, 
as $s$ changes from negative to positive,
the regions of positive and negative Gaussian curvature 
of $f^\pm$ are interchanged.

\begin{figure}[htbp]
\centering

\includegraphics[width=0.4\linewidth, bb=0 0 495 374]{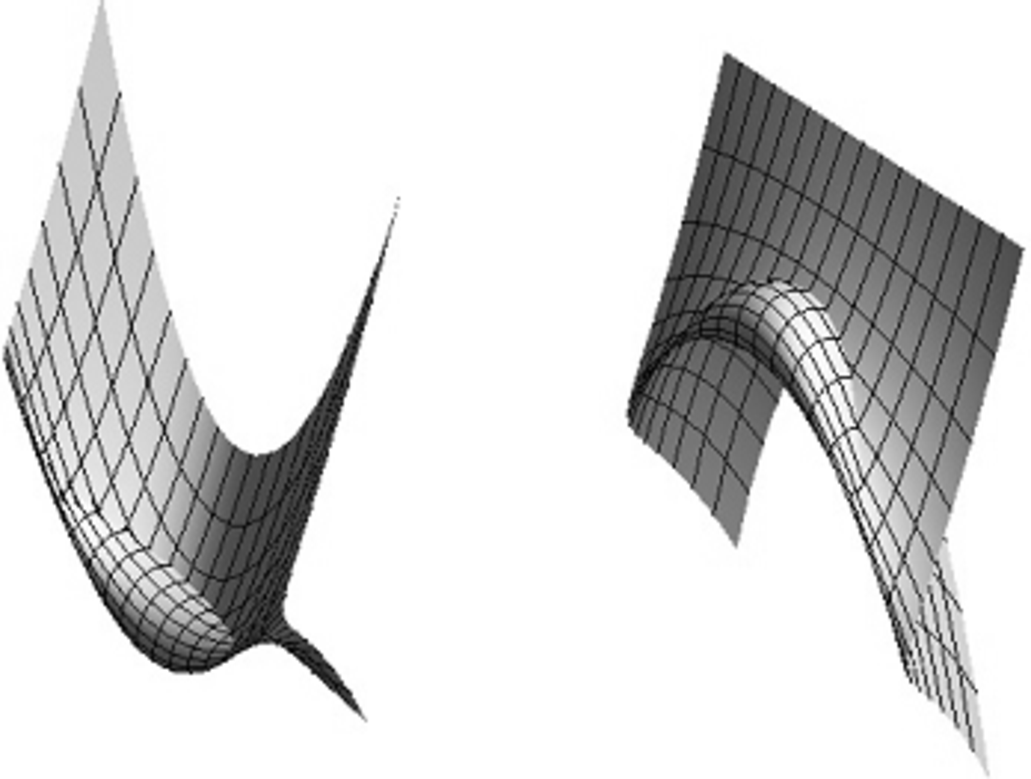}

\caption{The surfaces $f^+$ (left) and $f^-$ (right).}
\label{fig:gauss_plus}.
\end{figure}

We remark that there also is a relationship between
this corollary and the Gaussian curvature of cuspidal edges.
Let $f:(\R^2,0)\to(\R^3,0)$ be a front, and let $0$
be a non-degenerate singular point.
Assume that the singular points of $f$ consist of
cuspidal edges except for $0$.
If the normal curvature does not vanish,
then the Gaussian curvature diverges
along the set of singular points, and
it diverges to $+\infty$ or $-\infty$
across the set of singular points \cite[Corollary 3.6]{front}.
We remark that Theorem \ref{thm:gauss} can be
interpreted as a variant of this fact
for Whitney umbrellas.
The invariant $f_{31}(0)$ can be interpreted as the
umbilic curvature and it relates with the normal curvature.

\subsection{Curvature parabolas}
In this section, we consider 
geometric deformation of the curvature parabola.
It is known that the curvature parabola of 
a Whitney umbrella is a parabola,
and that of an $S_1^\pm$ singularity is a half-line.
We investigate 
how two parabolas appear from a half-line
as curvature parabolas associated with a Whitney umbrella 
and an $S_1^\pm$ singularity, respectively.

Following \cite{mn}, we introduce the curvature parabola and umbilic curvature. 
See \cite{mn} for details.
Let $ f : (\R^2,(u_0,v_0)) \to \R^3$ be a map-germ with
$\rank df_{(u_0,v_0)}=1$, and let $(u,v)$ be a
coordinate system.
Let $X$ be a vector field on $(\R^2,(u_0,v_0))$.
We set
$$
I(X,X)=df_{(u_0,v_0)}(X)\cdot df_{(u_0,v_0)}(X).
$$
Namely, if $X=a\partial_u+b\partial_v$, then
$I(X,X)=a^2E+2abF+b^2G$, where $E=f_u\cdot f_u$,
$F=f_u\cdot f_v$, $G=f_v\cdot f_v$.
Let $l$ be the line of the image of $df_{(u_0,v_0)}$,
and let $\pi$ be the orthonormal projection 
$\pi:\R^3\to l^\perp$.
For a vector field
$X=a\partial_u+b\partial_v$, we set
$$
II(X,X)=
\pi(a^2 f_{uu}(u_0,v_0)+2ab f_{uv}(u_0,v_0)+b^2f_{vv}(u_0,v_0)).
$$
Under these settings, the 
{\it curvature parabola\/} at $(u_0,v_0)$ is defined by
$$
C_{(u_0,v_0)}=\{II(X,X)\,|\,I(X,X)=1\}.
$$
If $f$ at $(u_0,v_0)$ is a Whitney umbrella,
then $C_{(u_0,v_0)}$ is a parabola, and if
$f$ at $(u_0,v_0)$ is an $S_1^\pm$ singularity,
then $C_{(u_0,v_0)}$ is a half-line.
We assume that $C_{(u_0,v_0)}$ is a half-line, and
we set $\nu_2$ to be a unit directional vector of the half-line.
Then $II(X,X)\cdot\nu_2$ does not depend on the choice of
$X$ satisfying $I(X,X)=1$.
The {\it umbilic curvature} $\kappa_u$ is
defined by $\kappa_u=|II(X,X)\cdot\nu_2|$.
The umbilic curvature 
is not defined for Whitney umbrellas.

Let $f_w(u,v)=(u,uv,(a_{20} u^2+2a_{11}uv
+a_{02}v^2)/2)$ be a Whitney umbrella.
Then 
$C_{(0,0)}=\{(2b,b^2 a_{02}+2ba_{11}+a_{20})\,|\,b\in\R\}$
holds, and it is a parabola with
the vertex $(-2a_{11}/a_{02},(a_{20}a_{02}-a_{11}^2)/a_{02})$, and
the direction of axis $(0,1)$
in the $yz$-plane in $\R^3$.
On the other hand, let us set
$f={f_{{\rm normal}}}^0$.
Then 
$C_{(0,0)}=\{2(f_{21}(0),f_{31}(0))+2(b^2,0)\,|\,b\in\R\}$
holds and it is a half-line with
the vertex $2(f_{21}(0),f_{31}(0))$ and
the direction of axis $(1,0)$
in the $yz$-plane in $\R^3$.
By Theorem \ref{thm:invexp}, we have
$$
\lim_{\tilde s\to0}\dfrac{2a_{11}(\tilde s)}{a_{02}(\tilde s)}
=2f_{31}(0).
$$
Thus the umbilic curvature for a Whitney umbrella
should be defined to be $2|a_{11}/a_{02}|$
as the projection of the vertex 
with respect to the direction that is normal to the axis.
This implies that we can extend the definition
of umbilic curvature to Whitney umbrellas in the deformation,
beyond the initial $S_1^\pm$ singularity, that is,
to other values of $\tilde s$.
In fact, in the above definition,
the umbilic curvature
as a function of $\tilde s$ is continuous.

On the other hand,
the {\it axial curvature} $\kappa_a$, which is defined in {\cite{os}}, is
$(a_{20}a_{02}-a_{11}^2)/a_{02}$, and
for $f={f_{{\rm normal}}}^0$, it is
$\kappa_a=2f_{21}(0)$.
By  Proposition \ref{prop:wuinv} and Theorem \ref{thm:invexp},
we have
\begin{align*}
\lim_{\tilde s\to0}\dfrac{a_{20}(\tilde s)a_{02}(\tilde s)-a_{11}^2(\tilde s)}{a_{02}(\tilde s)}
&=\lim_{\tilde s\to0}\dfrac{A^{-1}C^{-4}(B^2D-E^2)/4}{A^{1/2}B^{3/2}C^{-2}}\\
&=\lim_{\tilde s\to0}\dfrac{\dfrac{f_{21}(0)}{c_{20}^2}+O_{\tilde s}(1)}{\dfrac{1}{2c_{20}^2}+O_{\tilde s}(1)}\\
&=2f_{21}(0),
\end{align*}
which implies the axial curvature is also continuous
as a function of $\tilde s$.

\subsection{Focal conics}\label{sec:focalconic}
In this section, we consider 
geometric deformation of the focal set.
It is known that the focal set of a Whitney umbrella is a conic,
and that of an $S_1^\pm$ singularity consists of two lines.
We investigate 
how two conics appear from two lines
as focal sets associated with a Whitney umbrella 
and an $S_1^\pm$ singularity, respectively.

\begin{definition}
Let\/ $ f : (\R^2,(u_0,v_0)) \to \R^3$ be a map-germ with\/
$\rank df_{(u_0,v_0)}=1$, and let\/ $(u,v)$ be a
coordinate system.
The {\it focal set} of\/ $f$ at\/ $(u_0,v_0)$ is defined by\/
$$
\{x\in \R^3|D_u^x(u_0,v_0)=D_v^x(u_0,v_0)=0,
D_{uu}^x(u_0,v_0)D_{vv}^x(u_0,v_0)-D_{uv}^x(u_0,v_0)^2=0\},
$$
where\/ $D^x$ is the distance squared function\/
$D^x(u,v)=\frac{1}{2}|x-f(u,v)|^2$ $(x\in\R^3)$.
\end{definition}
Since the focal set is a conic for the Whitney umbrella  \cite{fh-fronts}, 
the focal set is called the {\it focal conic}.
The classification for the focal conic
of a Whitney umbrella is known as follows:
\begin{proposition}\label{prop:focalconic}{\rm \cite[Proposition 3.4]{fh-fronts}}
Let\/ $f:(\R^2,0) \to (\R^3,0)$ be a map-germ with 
a Whitney umbrella. Then the focal conic of\/ $f$ at\/ $0$ is
an ellipse if and only if\/ $a_{02}a_{20}<0$,
a hyperbola if and only if\/  $a_{02}a_{20}>0$, and
a parabola if and only if\/ $a_{20}=0$.
\end{proposition}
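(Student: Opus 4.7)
The plan is to work directly from the Bruce--West normal form (Proposition \ref{thm:wumb}), writing
$$
f(u,v) = \bigl(u,\ uv + O(3),\ \tfrac{1}{2}(a_{20}u^2 + 2a_{11}uv + a_{02}v^2) + O(3)\bigr),
$$
and to test the three defining equations of the focal set at the Whitney umbrella point $(u,v)=0$. Since these equations only involve derivatives of $f$ up to second order evaluated at the origin, the $O(3)$ tails play no role and the whole argument reduces to linear algebra in the coefficients $a_{20}, a_{11}, a_{02}$.

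First I would show that the system $D^x_u(0,0) = D^x_v(0,0) = 0$ cuts out a $2$-plane in $\R^3$. Indeed $f_u(0,0) = (1,0,0)$ and $f_v(0,0) = (0,0,0)$, so $D^x_v(0,0)=0$ holds automatically while $D^x_u(0,0)=-x\cdot f_u(0,0)=0$ is equivalent to $x_1=0$. Writing $x=(0,y,z)$ and using $f_{uu}(0,0)=(0,0,a_{20})$, $f_{uv}(0,0)=(0,1,a_{11})$, $f_{vv}(0,0)=(0,0,a_{02})$, the formula $D^x_{ij}(0,0)=-x\cdot f_{ij}(0,0)+f_i(0,0)\cdot f_j(0,0)$ turns the Hessian condition $D^x_{uu}D^x_{vv}-(D^x_{uv})^2=0$ into the explicit equation
$$
(a_{20}a_{02}-a_{11}^2)\,z^2 - 2a_{11}\,yz - y^2 - a_{02}\,z = 0,
$$
which realises the focal set as a plane curve in the $yz$-plane.

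The classification then follows from the standard trichotomy for real plane conics. The discriminant of the homogeneous quadratic part $-y^2 - 2a_{11}yz + (a_{20}a_{02}-a_{11}^2)z^2$ is $AC-B^2=-a_{20}a_{02}$, so the curve is an ellipse when $a_{20}a_{02}<0$ and a hyperbola when $a_{20}a_{02}>0$. Applying the Whitney umbrella criterion (Lemma \ref{lem:wumb}) to the Bruce--West form yields $|f_u,f_{uv},f_{vv}|(0,0)=-a_{02}$, hence $a_{02}\ne 0$; in the remaining case $AC-B^2=0$ this forces $a_{20}=0$. Non-degeneracy of the parabola (as opposed to a line pair) is then checked by computing the full $3\times 3$ conic discriminant, which equals $a_{02}^2/4\ne 0$.

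The main obstacle is conceptual rather than computational: one must verify that the definition of the focal set---formulated a priori as a set in $\R^3$---really does localise to the $yz$-plane at the Whitney umbrella point, so that the planar classification of conics can be applied without modification. Once the vanishing $f_v(0,0)=0$ is exploited to collapse one of the linear constraints, all of the $O(3)$ remainder terms drop out automatically and the argument is a mechanical evaluation of the Hessian determinant in the Bruce--West coordinates.
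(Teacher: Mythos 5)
Your computation is correct, but note that the paper does not prove this statement at all: it is imported verbatim as \cite[Proposition 3.4]{fh-fronts}, so there is no internal proof to compare against. Your argument is a sound self-contained verification and in fact follows the same route as the cited source: evaluating $D^x_u, D^x_v$ and the Hessian of $D^x$ at the origin in the Bruce--West form, which depends only on the $2$-jet so the $O(3)$ tails drop out, and arriving at the focal-conic equation $(a_{20}a_{02}-a_{11}^2)z^2-2a_{11}yz-y^2-a_{02}z=0$ in the normal plane $\{x_1=0\}$, whose quadratic-part discriminant is $-a_{20}a_{02}$. Two small points. First, with the ordering you wrote, $|f_u,f_{uv},f_{vv}|(0,0)=+a_{02}$ (it is $-a_{02}$ for the ordering $|f_u,f_{vv},f_{uv}|$ used in Lemma \ref{lem:wumb}); only the nonvanishing matters, so this is harmless. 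Second, in the case $a_{20}a_{02}<0$ you should also rule out the empty (imaginary) ellipse: this is immediate either from the fact that the conic passes through $(y,z)=(0,0)$, or from your own computation that the full $3\times3$ discriminant equals $a_{02}^2/4>0$ while the quadratic part is negative definite there. With those remarks the proof is complete.
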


We have the following corollary for focal conics
appearing on deformations of  $S_1^\pm$ singularities:
\begin{corollary}\label{cor:hyperbola}
In the deformations of\/ ${S_1}^\pm$ singularities 
with\/ $f_{31}(0)\ne 0$, all the focal conics 
for sufficiently small\/ $\tilde{s}$ are hyperbolas.
\end{corollary}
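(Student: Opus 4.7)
The plan is to reduce the corollary to the sign of the product $a_{20}(\tilde s)\,a_{02}(\tilde s)$ at the Whitney umbrella points $(u(\tilde s),0)$ that appear for $\tilde s\ne 0$, and then read off that sign from the asymptotic expansions already established in Theorem \ref{thm:invexp}. The bridge to the shape of the focal conic is Proposition \ref{prop:focalconic}: at a Whitney umbrella, the focal conic is a hyperbola if and only if $a_{20}a_{02}>0$. So it suffices to prove that $a_{20}(\tilde s)\,a_{02}(\tilde s)>0$ for all sufficiently small $\tilde s\ne 0$.

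First I would recall from Theorem \ref{thm:invexp} the leading-order behavior
\[
a_{20}(\tilde s)=\frac{1}{\tilde s^{2}}\left(\frac{f_{31}^{2}(0)}{2c_{20}^{2}}+O(\tilde s)\right),\qquad
a_{02}(\tilde s)=\frac{1}{\tilde s^{2}}\left(\frac{1}{2c_{20}^{2}}+O(1)\right),
\]
valid at the singular point $(u(\tilde s),0)\in S({f_{\rm normal}}^{-\tilde s^{2}})$ produced by Theorem \ref{thm:singcurve}. Multiplying these two expansions yields
\[
a_{20}(\tilde s)\,a_{02}(\tilde s)=\frac{1}{\tilde s^{4}}\left(\frac{f_{31}^{2}(0)}{4c_{20}^{4}}+O(\tilde s)\right).
\]
Here the leading coefficient $f_{31}^{2}(0)/(4c_{20}^{4})$ is \emph{strictly} positive under the hypothesis $f_{31}(0)\ne 0$, because $f_{31}^{2}(0)>0$ and $c_{20}\ne 0$ (the latter is built into the hypothesis $c_{2}(0)=c_{20}^{2}>0$ used throughout Section \ref{sec:geom}).

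From here the conclusion is routine: by continuity of the remainder, there exists $\tilde s_{0}>0$ such that the bracketed expression stays within, say, half of its leading value, and hence remains positive for $0<|\tilde s|<\tilde s_{0}$. Since $\tilde s^{-4}>0$ automatically, this gives $a_{20}(\tilde s)\,a_{02}(\tilde s)>0$ on the punctured interval. Applying Proposition \ref{prop:focalconic} at the Whitney umbrella points $(u(\pm\tilde s),0)$ (which are Whitney umbrellas by Theorem \ref{thm:singcurve}) then shows that each corresponding focal conic is a hyperbola.

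There is essentially no real obstacle here; the only point requiring minimal care is making sure we use the expansions at the actual Whitney umbrella points $(u(\tilde s),0)$ on the deformed surface (which is precisely how $a_{20}(\tilde s)$, $a_{11}(\tilde s)$, $a_{02}(\tilde s)$ were defined as functions of $\tilde s$), and noting that the $f_{31}^{2}(0)$ appearing in the leading term is a square, so the sign does not depend on the sign of $\tilde s$ or on which of the two Whitney umbrellas $(u(+\tilde s),0)$ or $(u(-\tilde s),0)$ we consider.
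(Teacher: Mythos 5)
Your proposal is correct and follows the same route as the paper: both deduce from Theorem \ref{thm:invexp} that $a_{20}(\tilde s)a_{02}(\tilde s)$ has positive leading term $f_{31}^2(0)/(4c_{20}^4\tilde s^4)$, hence diverges to $+\infty$ and in particular is positive for small $\tilde s\ne0$, and then apply Proposition \ref{prop:focalconic}. You merely spell out the multiplication of the expansions and the continuity argument that the paper leaves implicit.
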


\begin{proof}
By Theorem \ref{thm:invexp} if $f_{31}(0) \ne 0$, 
then $a_{02}a_{20}$ diverges to $+\infty$
when $\tilde s\to0$.
Therefore, by Proposition \ref{prop:focalconic}, 
we obtain the assertion.
\end{proof}
As we remarked,
this fact can be
interpreted as a variant of the fact
about the singular curvature on cuspidal edges.

We give an example of the case $f_{31}(0)\ne0$ whose focal conics are ellipses when $\tilde{s}$ is not close to $0$ and are hyperbolas when $\tilde{s}$ is close to $0$.

\begin{example}\label{ex:ex4}
 Let $f$ be a deformation of an $S_1^+$ singularity defined by
 $$f(u,v,s)=(u,-u^2+v^2,u^2+v^3+vs^2+u^2v).$$
 In this case $f_{31}(0)\ne0$ holds. The focal conic of $f$ is an ellipse when $s < -1/4$, a parabola when $s= -1/4$, a hyperbola when $-1/4 <s < 0$ and  two transversal lines when $s=0$. See Figure \ref{fig:02}.
\end{example}

\begin{figure}[h!]
\centering
\includegraphics[width=0.6\linewidth, bb=0 0 1524 579]{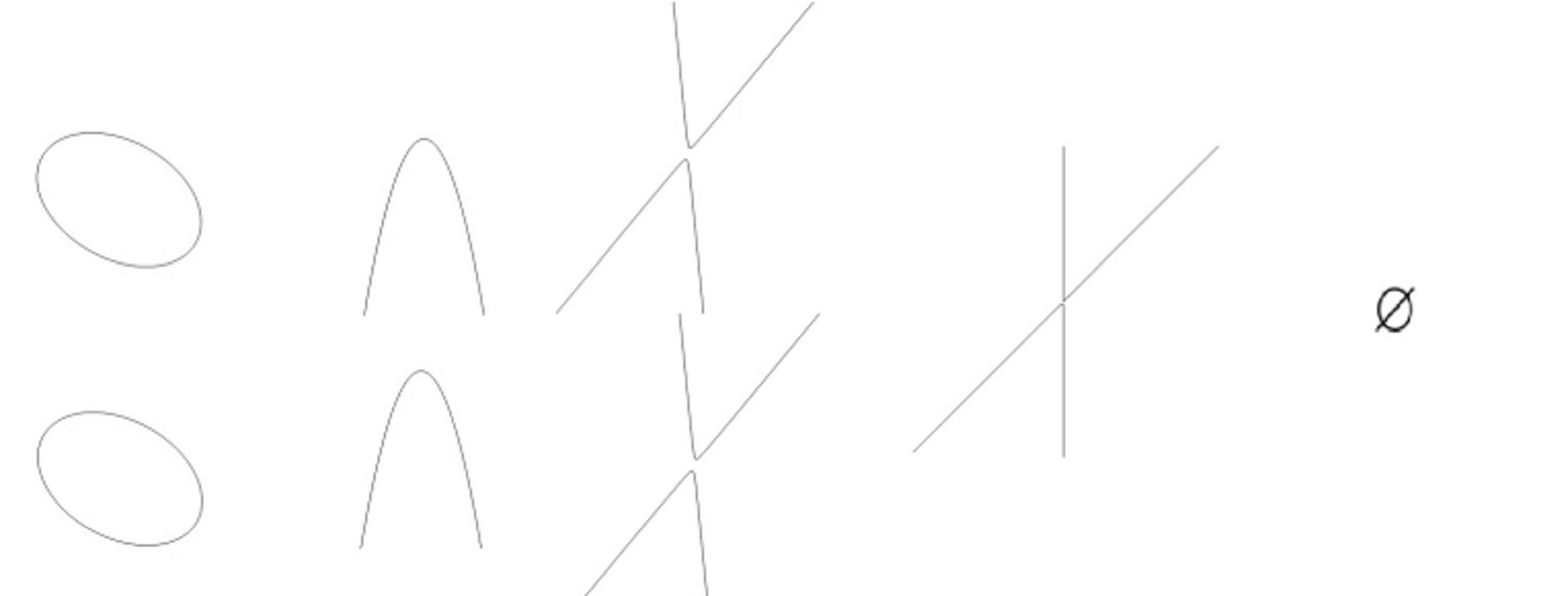}
\caption{The focal conics in Example \ref{ex:ex4} (from left to right $s=-1, -1/2, -1/5, 0, 1/5$)} 
\label{fig:02}
\end{figure}

Next we give an example in the case of $f_{31}(0)=0$. 
We can observe the focal conic is a parabola even if $\tilde{s}$ is close to $0$.

\begin{example}\label{ex:ex5}
 Let $f$ be a deformation of an $S_1^+$ singularity defined by
 $$f(u,v,s)=(u,v^2,v^3-vs^2+u^2v).$$
 In this case, $f_{31}(0)=0$ holds. The focal conic of $f$ is a parabola when $s<0$ and a line when $s=0$. See  Figure \ref{fig:03}.
\end{example}

\begin{figure}[h!]
\centering

\includegraphics[width=0.5\linewidth, bb=0 0 752 466]{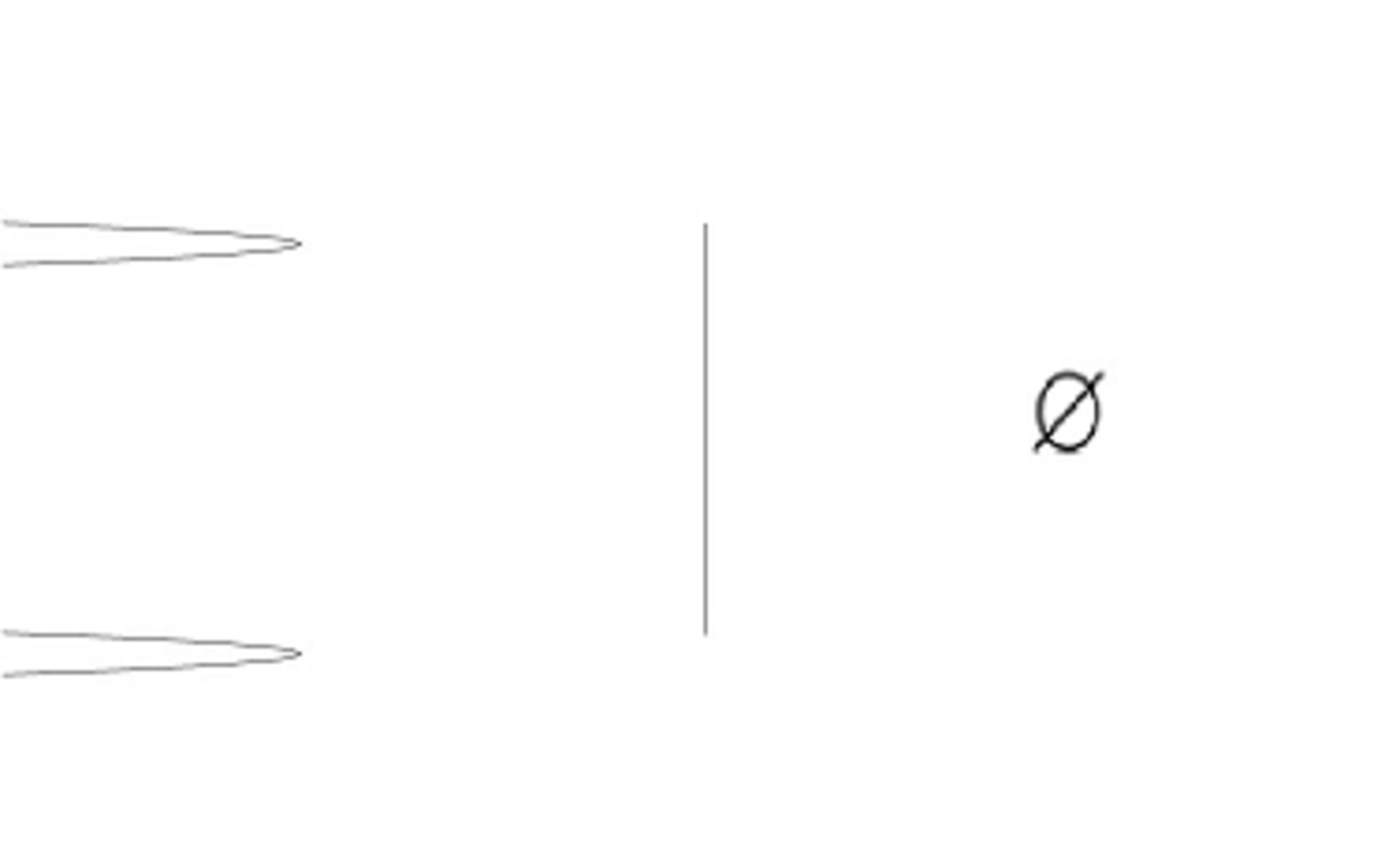}

\caption{The focal conics in Example \ref{ex:ex5} (from left to right $s=-1, 0, 1$)}
\label{fig:03}
\end{figure}

\subsection{Geometry on trajectories of singular points}
In this section, we give a geometric meaning for
the lowest order coefficients $f_{24}(0, 0)$ and $f_{34}(0, 0)$,
including the deformation parameters.
The trajectory of the singular points $S({f_{{\rm normal}}}^{-\tilde{s}^2})$ 
for the deformation of the $S_1^\pm$ singularities ${f_{{\rm normal}}}^s$ 
is a space curve passing through the origin. It is parameterized by
$$
\gamma (\tilde{s}):={f_{{\rm normal}}}^{-\tilde{s}^2}(u(\tilde{s}),0),
$$
where $u(\tilde{s})$ is given in Theorem \ref{thm:singcurve}.
Then the curvature $\kappa$ of $\gamma$ at $\tilde s=0$ is
$\kappa=2(f_{21}^2(0)+f_{31}^2(0))^{1/2}$,
which implies
$\kappa=(\kappa_u^2+\kappa_a^2)^{1/2}$.
Moreover, if $f_{21}^2(0)+f_{31}^2(0)\ne0$, then
$$f_{24}(0,0)=\dfrac{\tau f_{31}(0)-\dfrac{\kappa'c_{20}f_{21}(0)}{\kappa}+3\dfrac{df_{21}}{du}(0)}{3c_{20}^2}$$
and
$$f_{34}(0,0)=\dfrac{-\tau f_{21}(0)-\dfrac{\kappa'c_{20}f_{31}(0)}{\kappa}+3\dfrac{df_{31}}{du}(0)}{3c_{20}^2}$$
hold, where $\tau$ is the torsion of $\gamma$, and  
$\kappa'=d\kappa/d\tilde{s}$.

\begin{remark}
In Theorem \ref{thm:normal}, if we allow the isometry $T\in SO(3)$ to depend on the deformation
parameter $s$, written as $T(s)$, then one can reduce to $(f_{21}(0),f_{31}(0))=(0,0)$ in  ${f_{{\rm normal}}}^s$.
This can be realized by considering the following diagram:
$$
\begin{CD}
     (\R^2\times\R,0) @>{({f_{{\rm normal}}}^s,\operatorname{id})}>> (\R^3\times\R,0) \\
  @V{\phi}VV    @V{(T^{-1}(s),\phi_3(s))}VV \\
     (\R^2\times\R,0)   @>{(f,\operatorname{id})}>>  (\R^3\times\R,0)
     \end{CD}
$$
\end{remark}

\begin{acknowledgements}
The author would like to express her deepest gratitude to Kentaro Saji and  Wayne Rossman for fruitful discussions and advice.
\end{acknowledgements}


\medskip
{\footnotesize
\begin{flushright}
\begin{tabular}{l}
Department of Mathematics,\\
Graduate School of Science, \\
Kobe University, \\
1-1, Rokkodai, Nada, Kobe \\
657-8501, Japan\\
E-mail: {\tt 231s010s@stu.kobe-u.ac.jp}
\end{tabular}
\end{flushright}}

\end{document}